\def\@maketitle{%
  \newpage
  {\vspace*{-11ex}%
  \small\noindent\begin{tabular}{@{}l}
% Analysis \SDvolume, \SDpagebegin--\SDpageend{} (\SDyear)\\
 % \copyright{} R. Oldenbourg Verlag, M\"{u}nchen \SDyear
  \end{tabular}}
  \null
  \vskip 5em%
  \begin{flushleft}%
    {\LARGE \bf\@title \par}% war urspruenglich \Huge \@title, htb
    \vskip 1.5em%
       {\large\bf \lineskip .5em \@author \par}%
    \vskip 1.0em%
       % {\normalsize\it \lineskip .5em Received: \SDreceived; Accepted: \SDaccepted}%
  \end{flushleft}% 
  \par
  \vskip 1.5em}
\renewcommand\section{\@startsection {section}{1}{\z@}%
                                   {-3.5ex \@plus -1ex \@minus -.2ex}%
                                   {1.5ex \@plus.2ex}%
                                   {\normalfont\Large\bfseries}}
\renewcommand\subsection{\@startsection{subsection}{2}{\z@}%
                                     {-3.0ex\@plus -2ex \@minus -.2ex}%
                                     {1.0ex \@plus .2ex}%
                                     {\normalfont\large\bfseries}}
\renewcommand\subsubsection{\@startsection{subsubsection}{3}{\z@}%
                                     {-2.5ex\@plus -2ex \@minus -.2ex}%
                                     {0.9ex \@plus .2ex}%
                                     {\normalfont\normalsize\bfseries}}
\renewcommand\paragraph{\@startsection{paragraph}{4}{\z@}%
                                     {-2.0ex \@plus1ex \@minus.2ex}%
                                     {-1em}%
                                     {\normalfont\normalsize\bfseries}}
\renewcommand\subparagraph{\@startsection{subparagraph}{5}{\parindent}%
                                       {-1.5ex \@plus1ex \@minus .2ex}%
                                       {-1em}%
                                      {\normalfont\normalsize\bfseries}}
\def\@evenhead{\footnotesize\thepage\hfil\slshape\leftmark}%
\def\@oddhead{\footnotesize{\slshape\rightmark}\hfil\thepage}%
\numberwithin{equation}{section} \numberwithin{figure}{section}
\numberwithin{table}{section}
\renewcommand{\@makecaption}[2]{\begin{quote}
\footnotesize {\bf #1}~#2
\end{quote}}
\newenvironment{proof}{{\vskip\baselineskip\noindent\textbf{Proof.}}}%
{\hspace*{.1pt}\hspace*{\fill}\BOX\vskip\baselineskip}
\newcommand{\BOX}{\ensuremath\Box}
\newtheorem{theorem}{Theorem}[section]
\newtheorem{lemma}[theorem]{Lemma}
{\theorembodyfont{\rmfamily}}
{\theorembodyfont{\rmfamily}}
{\vskip\baselineskip\noindent\textbf{Proof of {#1}.}}%
{\hspace*{.1pt}\hspace*{\fill}\BOX\vskip\baselineskip}
{\vskip\baselineskip\noindent\textbf{Proof of Theorem \protect\ref{#1}:}}%
{\hspace*{.1pt}\hspace*{\fill}\BOX\vskip\baselineskip}
{\vskip\baselineskip\noindent\textbf{Proof of Theorems \protect\ref{#1} --
\protect\ref{#2}:}}%
{\hspace*{.1pt}\hspace*{\fill}\BOX\vskip\baselineskip}
\newcommand{\C}{\mathbb{C}}
\newcommand{\N}{\mathbb{N}}
\newcommand{\Z}{\mathbb{Z}}
\newcommand{\R}{\mathbb{R}}
\numberwithin{equation}{section}
\begin{document}
\thispagestyle{myheadings}
\markboth{Gawronski et al.}{Asymptotics of Chebyshev-Stirling and Stirling Numbers of the Second Kind}

\title{\bf 
\begin{center}
Asymptotics of Chebyshev-Stirling and Stirling numbers of the second kind
\end{center}
}
\author{\bf \centerline{Wolfgang Gawronski $^{a}$, Lance L. Littlejohn $^{b, *}$, Thorsten Neuschel $^{c, **}$}}
\maketitle
\date{\today}

\vspace*{0.5cm}

%\begin{quote}
%\begin{abstract}
{\bf Abstract:} For the Chebyshev-Stirling numbers, a special case of the Jacobi-Stirling numbers, asymptotic formulae are derived in terms of a local central limit theorem. The underlying probabilistic approach also applies to the classical Stirling numbers of the second kind. Thereby a supplement of the asymptotic analysis for these numbers is established.\\
%\end{quote}

{\bf Keywords:} Jacobi-Stirling numbers, Chebyshev-Stirling numbers, Stirling numbers, asym\-ptotics, local central limit theorems\\*[0.1cm]
%{\bf Mathematical subject classification:} 

%**********************************************************************************

\section{Introduction and summary} \label{intro}
The first objects of our consideration are the celebrated Stirling numbers of the second kind or partition numbers. Following Knuth \cite{21}, see also \cite{14}, we denote these numbers by the curly bracket symbol
$\big\{ ^n_j\big\}, n,j$ being non-negative integers. It is well-known \cite{9}, \cite{14} that they satisfy the triangular recurrence relation
\begin{align} \label{eq:1.1}
\left\{ \begin{array}{c}
n \\ j  \end{array}
\right\} =
\left\{    \begin{array}{c}
n-1 \\ j-1 \end{array}
\right\} + j 
\left\{  \begin{array}{c}
n-1 \\ j \end{array}
\right\}, ~
\left\{ \begin{array}{c}
n \\ 0  \end{array}
\right\} = \delta_{n,0}\, , ~
\left\{ \begin{array}{c}
0 \\ j  \end{array}
\right\} = \delta_{0,j}\, ,
\end{align}
$\delta_{n,j}$ being Kronecker's symbol, and they admit the explicit representation
\begin{align} \label{eq:1.2}
\left\{ \begin{array}{c}
n \\ j  \end{array}
\right\}
= \frac{1}{j!} \sum_{r=0}^j {j \choose r}\, (-1)^r \,(j-r)^n.
\end{align}
Our second objects are the \textit{Jacobi-Stirling numbers of the second kind} which we denote by
$\big\{ ^n_j\big\}_\gamma$, where $\gamma$ is a fixed and positive parameter and $n,j$ are non-negative integers. These numbers were discovered in the study of the spectral theory of powers of the Jacobi differential operator \cite{12}, \cite{13}, \cite{24}. A formal definition of these numbers can be given through the triangular recurrence relation
\begin{align} \label{eq:1.3}
\left\{ \begin{array}{c}
n \\ j  \end{array}
\right\}_\gamma = 
\left\{ \begin{array}{c}
n-1 \\ j-1 \end{array}
\right\}_\gamma + j (j + 2\gamma - 1)
\left\{ \begin{array}{c}
n-1 \\ j \end{array}
\right\}_\gamma , ~
\left\{ \begin{array}{c}
n \\ 0 \end{array} 
\right\}_\gamma = \delta_{n,0}, ~
\left\{ \begin{array}{c}
0 \\ j  \end{array}
\right\}_\gamma = \delta_{0,j}\, ,
\end{align}
\cite{3}, \cite{17}. During the past decade they received considerable attention resulting in a series of papers on differential equations, combinatorics, and graph theory \cite{1}, \cite{2}, \cite{3},\cite{7}, \cite{8}, \cite{10}, \cite{12}, \cite{13}, \cite{17}, \cite{18}, \cite{24}, \cite{26}, \cite{27}. A brief account of the background is given in section 2 below. \\

For the unique solution of the recurrence \eqref{eq:1.3} the following representation is known, \cite{3}, \cite{13}, \cite{17},
\begin{equation} \label{eq:1.4}
\left\{ \begin{array}{c}
n \\ j  \end{array}
\right\}_\gamma = \sum_{r=0}^j (-1)^{r+j}\,
\frac{(2r+2\gamma-1) \Gamma (r+2\gamma-1) \big( r(r+2\gamma-1)\big)^n}{r! (j-r)! \Gamma (j+r+2\gamma)}\, ,
\end{equation}
where $n,j \in \N_0 := \{0,1,2,\ldots\}$ and $\gamma > 0$. We immediately infer the easy asymptotic statement
\begin{equation} \label{eq:1.5}
\left\{ \begin{array}{c}
n \\ j  \end{array}
\right\}_\gamma \sim \frac{\Gamma (j+2\gamma-1)}{j! \Gamma (2j+2\gamma-1)} \,
\big( j(j+2\gamma-1)\big)^n, \qquad\text{as } n\to\infty\, ,
\end{equation}
provided that $j \geq 1$ is \textit{fixed}, where throughout the symbol $\sim$ means that the ratio of both sides in \eqref{eq:1.5} tends to 1 as usual. This property corresponds to the well known trivial asymptotics for the classical Stirling numbers
\begin{equation} \label{eq:1.6}
\left\{ \begin{array}{c}
n \\ j  \end{array}
\right\} \sim \frac{j^n}{j!}, \qquad \text{as } n\to\infty, ~ j \textit{ fixed},
\end{equation}
resulting from \eqref{eq:1.2}. Much more interesting are asymptotics for these numbers holding \textit{uniformly} with respect to $j$. The literature on Stirling numbers includes a variety of such results for $\big\{ ^n_j\big\}$, however all of them contain quantities which are undetermined or they are defined implicitly as solutions of certain transcendental equations only, see e. g. \cite{4}, \cite{5}, \cite{9}, \cite{14}, \cite{28}, \cite{36}.  For the Jacobi-Stirling numbers $\big\{ ^n_j\big\}_\gamma$ besides the ``pointwise asymptotics'' in \eqref{eq:1.5} none is known as far as the authors are aware. So it is natural to ask for asymptotics regarding the sequences $\big\{ ^n_j\big\}_\gamma$ and $\big\{ ^n_j\big\}$ as $n\to\infty$, both holding uniformly with respect to $j\in\Z$, where $\Z$ denotes the set of integers as customary. In particular except for error terms the leading parts should be given \textit{explicitly} and in terms of \textit{elementary} functions including factorials.\\

For technical reasons which will be explained in section 3 we have to restrict the Jacobi-Stirling case to the special value $\gamma = 1/2$. Regarding the origin of the numbers $\big\{ ^n_j\big\}_{1/2}$ mentioned above, this case corresponds to the Chebyshev differential operator. Hence in the sequel we call these numbers \textit{Chebyshev-Stirling} numbers of the second kind. For details, see section 2. To be more precise the two main results of this article give affirmative answers to the above raised questions by the following asymptotic forms in terms of local central limit theorems. We prove
\begin{equation} \label{eq:1.7}
\frac{2\sqrt{b_n}\, j! (\log 2)^{n+1}}{n!} \, 
\left\{ \begin{array}{c}
n \\ j  \end{array}
\right\} = \frac{1}{\sqrt{2\pi}}\, e^{-x^2/2} 
\left( 1 + \frac{c_n (x^3 - 3x)}{6\sqrt{n}}\right) + o \left( \frac{1}{\sqrt{n}}\right)\, ,
\end{equation}
as $n\to\infty$, where $x = (j-a_n)/\sqrt{b_n}$, and elementary expressions $a_n, b_n, c_n$ given explicitly in Lemma 5.2 below. Similarly we get
\begin{equation} \label{eq:1.8}
\frac{\sqrt{5b_n'} (2j)! \omega^{2n+1}}{2(2n)!} \, 
\left\{ \begin{array}{c}
n \\ j  \end{array} \right\}_{1/2} =
\frac{1}{\sqrt{2\pi}}\, e^{-x^2/2}\, 
\left( 1 + \frac{c_n' (x^3 - 3x)}{6\sqrt{n}}\right) + o \left( \frac{1}{\sqrt{n}}\right)\, ,
\end{equation}
as $n\to\infty$, where $x = (j-a_n')/\sqrt{b_n'}$ and again elementary quantities $a_n', b_n', c_n', \omega$ given explicitly in Lemma 6.1. \\

For a discussion of the approximation of the kind \eqref{eq:1.7}, \eqref{eq:1.8} we refer to sections 4, 5, 6. In particular both remainder terms hold uniformly with respect to $j\in\Z$. In establishing \eqref{eq:1.7} and \eqref{eq:1.8}, the basic tools are taken from the central limit theory of probability and from special functions by the so-called Lindel\"of-Wirtinger expansion for the  Euler-Frobenius polynomials (Lemma 3.1)
%\ref{Lemma 3.1}
. This approach to asymptotics occasionally has been applied to various special functions and sequences in the literature, see e. g.
\cite{4}, \cite{6}, \cite{9}, \cite{14}, \cite{16}, \cite{19}, \cite{35}, \cite[Chapter 3]{37}.
\section{Background of the Chebyshev-Stirling numbers}
Most of the current interest and research in the Jacobi-Stirling numbers of the second kind
$%
%TCIMACRO{\QATOPD{\{}{\}}{n}{j}}%
%BeginExpansion
\genfrac{\{}{\}}{0pt}{}{n}{j}%
%EndExpansion
_{\gamma}$ lies in the applications to combinatorics and graph theory; the
application of these numbers in these areas is natural since they
mimic many of the properties of the classical Stirling numbers of the second kind.\\ 

It is important to note, however that these numbers were originally discovered
in the left-definite spectral analysis of integral powers of the second-order
Jacobi differential expression%
\[
\ell\lbrack y](x):=\frac{1}{(1-x)^{\alpha}(1+x)^{\beta}}\left(  -\left(
(1-x)^{\alpha+1}(1+x)^{\beta+1}y^{\prime}(x)\right)  ^{\prime}+k(1-x)^{\alpha
}(1+x)^{\beta}\right)  ,
\]
where, for classical reasons, the constants $\alpha,\beta>-1$ and $k\geq0$ are
fixed. Indeed, in \cite{13}, the authors proved that, for each
$n\in\mathbb{N},$
\begin{equation}
\ell^{n}[y](x)=\dfrac{1}{(1-x)^{\alpha}(1+x)^{\beta}}\sum_{j=0}^{n}%
(-1)^{j}\left(  c_{j}^{\gamma}(n,k)(1-x)^{\alpha+j}(1+x)^{\beta+j}%
y^{(j)}(x)\right)  ^{(j)}\label{Jacobi powers special case}%
\end{equation}
for $x\in(-1,1),$ where $2\gamma -1 = \alpha + \beta +1$, and where the coefficients $c_{j}^{\gamma}(n,k)$ $(j=0,1,\ldots
n)$ are non-negative and given explicitly by
\[
c_{0}^{\gamma}(n,k)=\left\{
\begin{array}
[c]{ll}%
0 & \text{if }k=0\\
k^{n} & \text{if }k>0
\end{array}
\right.  \text{ and, for $j \ge 1$,  }c_{j}^{\gamma}(n,k)=\left\{
\begin{array}
[c]{ll}%
%TCIMACRO{\QATOPD{\{}{\}}{n}{j}}%
%BeginExpansion
\genfrac{\{}{\}}{0pt}{}{n}{j}%
%EndExpansion
_{\gamma} & \qquad{\text{if }k=0}\\
\sum_{r=0}^{n-j}\binom{n}{r}%
%TCIMACRO{\QATOPD{\{}{\}}{n-r}{j}}%
%BeginExpansion
\genfrac{\{}{\}}{0pt}{}{n-r}{j}%
%EndExpansion
_{\gamma}k^{r} & \qquad{\text{if }k>0}.
\end{array}
\right.
\]
The Chebyshev differential operator corresponds to the case $\alpha = \beta = -1/2$, that is $\gamma = 1/2$. Details of
these analytic results can be found in \cite{13}.\\

Starting with the Legendre-Stirling case $\gamma =1$ in \cite{1}, which corresponds to the parameters $\alpha = \beta = 0$, in the sequel a series of authors developed various combinatorial models, thereby illustrating the significance of the Jacobi-Stirling numbers and related quantities. We refer to the articles \cite{2}, \cite{3}, \cite{7}, \cite{8}, \cite{10}, \cite{17}, \cite{18}, \cite{26}, \cite{27}. However none of these papers contains information on asymptotics.

\section{Elementary properties and analytic tools}
In this section we collect some analytic facts which are relevant for proving our main results below. Along with these preparations we already obtain new information for the sequences under consideration. \\

To begin with we introduce a special case of the Euler-Frobenius polynomials, $P_n$ say, which usually are introduced via the rational function expansion
\begin{equation} \label{eq:3.1}
\sum_{\nu=0}^\infty \nu^n z^\nu = \left( z\, \frac{d}{dz}\right)^n \, \frac{1}{1-z} =
\frac{P_n (z)}{(1-z)^{n+1}}\, , \qquad n\in\N_0\, .
\end{equation} 
The power series and the polynomials $P_n$ of degree $n$ play important roles in various parts of mathematics and many of its applications as described for instance in \cite{9}, \cite{14}, \cite{15}, \cite{16}, \cite{22}, \cite{31}, \cite{32}. Here we only present a few properties which are significant for our purpose.
\begin{lemma} \label{lem3.1}
Suppose that $n$ is a positive integer.
\begin{itemize}
\item[i)]  All zeros of $P_n$ are real, simple, non-positive, $n$ in number, and $P_n (0) = 0$; moreover
$P_n (-1) = 0$ iff $n$ is even. If $n = 2k$ (respectively  $2k+1$), then $k-1$ (respectively $k$) zeros are located  in each of the intervals
$(-\infty,-1)$ and $(-1,0)$.
\item[ii)] For $z\in\C\setminus \{1\}$ we have
\begin{equation} \label{eq:3.2}
\frac{P_n (z)}{(1-z)^{n+1}} = n! \sum_{m=-\infty}^\infty\, \frac{1}{\big(2\pi im + \log (1/z)\big)^{n+1}}\, .
\end{equation}
\end{itemize}
\end{lemma}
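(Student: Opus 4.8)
I would establish part~ii) first, since its statement also feeds into part~i). The plan for \eqref{eq:3.2} is to bring the pole structure into view via the substitution $w=\log(1/z)$, i.e.\ $z=e^{-w}$. Under it $z\,\frac{d}{dz}=-\frac{d}{dw}$ and $\frac{1}{1-z}=\frac{1}{1-e^{-w}}$, so \eqref{eq:3.1} turns into
\[
\frac{P_n(z)}{(1-z)^{n+1}}=\Bigl(z\,\tfrac{d}{dz}\Bigr)^n\frac{1}{1-z}=(-1)^n\,\frac{d^n}{dw^n}\,\frac{1}{1-e^{-w}}.
\]
The function $w\mapsto(1-e^{-w})^{-1}$ is meromorphic with only simple poles, at $w=2\pi i m$ for $m\in\Z$, each of residue $1$; its classical Mittag--Leffler (partial-fraction) expansion reads
\[
\frac{1}{1-e^{-w}}=\frac12+\sum_{m\in\Z}\frac{1}{w-2\pi i m},
\]
with the sum taken in the symmetric sense. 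For $n\ge1$ the constant disappears under $\tfrac{d^n}{dw^n}$, and since $\sum_m|w-2\pi i m|^{-(n+1)}$ converges locally uniformly off the poles, the differentiation may be carried out termwise, using $\tfrac{d^n}{dw^n}(w-2\pi i m)^{-1}=(-1)^n n!\,(w-2\pi i m)^{-(n+1)}$. Substituting back and re-indexing $m\mapsto-m$ gives \eqref{eq:3.2}, valid first for $z$ near $1$ and then for all $z\in\C\setminus\{1\}$ by analytic continuation, the right-hand side being invariant under $\log(1/z)\mapsto\log(1/z)+2\pi i$.

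For part~i) I would begin from the recurrence produced by one application of $z\frac{d}{dz}$ to \eqref{eq:3.1},
\[
P_n(z)=z(1-z)\,P_{n-1}'(z)+n\,z\,P_{n-1}(z),\qquad P_1(z)=z,
\]
from which an easy induction shows that $P_n$ is monic of degree $n$ with $P_n(0)=0$ (its coefficients are in fact the nonnegative Eulerian numbers, but this is not needed below). Reality, simplicity and interlacing of the zeros then follow by induction: if $P_{n-1}$ has $n-1$ distinct real zeros $0=\zeta_0>\zeta_1>\cdots>\zeta_{n-2}$, evaluation of the recurrence at $\zeta_i$ gives $P_n(\zeta_i)=\zeta_i(1-\zeta_i)P_{n-1}'(\zeta_i)$; since $\zeta_i\le0$ and $1-\zeta_i>0$ while $P_{n-1}'(\zeta_i)$ alternates in sign, a Rolle-type sign count produces a zero of $P_n$ strictly between consecutive $\zeta_i$, one further zero in $(-\infty,\zeta_{n-2})$ (using that $P_n$ is monic of degree $n$), and the zero at $z=0$ forced by $P_n(0)=0$ --- altogether $n$ distinct real zeros of the degree-$n$ polynomial $P_n$, all in $(-\infty,0]$, so these are all of them and all simple. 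Finally, replacing $z$ by $1/z$ in \eqref{eq:3.2} and re-indexing the sum gives the functional equation $P_n(z)=z^{n+1}P_n(1/z)$; hence $\xi\mapsto1/\xi$ permutes the $n-1$ nonzero zeros, its only fixed point in $(-\infty,0)$ being $-1$. Thus these zeros split into pairs $\{\xi,1/\xi\}$ with $\xi\in(-1,0)$ and $1/\xi\in(-\infty,-1)$, together with a simple zero at $-1$ precisely when $n-1$ is odd, i.e.\ when $n$ is even; counting, for $n=2k$ one gets $k-1$ zeros in each of $(-\infty,-1)$ and $(-1,0)$ plus one at $-1$, and for $n=2k+1$ one gets $k$ in each of the two intervals and none at $-1$. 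This is the content of~i).

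The step I expect to be the main obstacle is the interlacing argument in i): one has to track the signs of $P_n$ at the distinguished points (the $\zeta_i$, the point $z=0$, and $-\infty$) carefully enough to be certain that exactly one zero falls in each predicted open interval, so that the resulting list of $n$ real zeros is complete and simplicity follows automatically; the base cases $n=1,2$ have to be checked directly to start the induction. A lesser technical point is making the Mittag--Leffler step in ii) rigorous --- identifying the entire part of $(1-e^{-w})^{-1}$ with the constant $\tfrac12$ and justifying the termwise differentiation --- but this is standard. Everything else is bookkeeping with the recurrence and the functional equation.
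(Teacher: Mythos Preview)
The paper does not actually prove Lemma~\ref{lem3.1}; it simply cites \cite{22}, \cite{30} for the zero properties in i) and identifies \eqref{eq:3.2} as a special case of the Lindel\"of--Wirtinger expansion, referring to \cite{16}, \cite{23}, \cite{25}, \cite{38}. Your self-contained argument is correct and is essentially the standard route one finds in those sources: Mittag--Leffler for $(1-e^{-w})^{-1}$ plus termwise differentiation gives ii), and the recurrence $P_n(z)=z(1-z)P_{n-1}'(z)+nzP_{n-1}(z)$ together with an interlacing induction and the palindromy $P_n(z)=z^{n+1}P_n(1/z)$ (which you correctly extract from \eqref{eq:3.2}) gives i).

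The one point that needs to be pinned down is exactly the one you flag. At $\zeta_0=0$ the recurrence yields $P_n(0)=0$, not a nonzero sign, so the sign-change argument does not by itself place a zero strictly inside $(\zeta_1,0)$. The cleanest fix is to note from the recurrence that $P_n(z)/z\big|_{z=0}=P_{n-1}'(0)$, whence $P_n'(0)=P_{n-1}'(0)=\cdots=P_1'(0)=1$; thus $z=0$ is always a simple zero and $P_n$ is negative immediately to the left of $0$, which together with $P_n(\zeta_1)>0$ secures the missing zero in $(\zeta_1,0)$ and makes the count come out to $n$.
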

Regarding proofs of the properties for the zeros we refer to \cite{22}, \cite{30}. The series in \eqref{eq:3.2}  is a special case of the so-called \textit{Lindel\"of-Wirtinger expansion} \cite{16}, \cite{23}, \cite[p. 34]{25}, \cite{38}. For $\log (1/z)$ we may choose that branch with $\text{\rm Im} \log (1/z)\in [0,2\pi)$. Then the analyticity on the punctured cut $(0,\infty)\setminus \{1\}$ is generated by the summation over all branches of the logarithm. At $z = 0$, the right-hand side is defined by continuity. Occasionally we write the Lindel\"of-Wirtinger expansion by
\begin{equation} \label{eq:3.3}
\frac{P_n (e^{-w})}{(1-e^{-w})^{n+1}} = n! \sum_{m=-\infty}^\infty\, \frac{1}{(w+2\pi im)^{n+1}}
\end{equation}
using an \textit{Eisenstein} series which is convenient for computing derivatives \cite{11}, \cite[p. 234]{33}.\\

An asymptotic analysis for many sequences having a combinatorial meaning often depends on the explicit knowledge of corresponding generating functions. With this in mind and on the basis of the representation formula \eqref{eq:1.4} for the Jacobi-Stirling numbers it turns out that in the special case $\gamma = 1/2$ we are able to obtain suitable identities which finally lead to asymptotic forms for the Chebyshev-Stirling numbers $\big\{ ^n_j\big\}_{1/2}$ having the properties described in the introduction (see section 6). A basic representation formula for the Chebyshev-Stirling numbers is given by the following analogue of \eqref{eq:1.2} for the Stirling numbers.
\begin{lemma} \label{lem3.2}
If $n,j\in\N_0$, then we have 
\begin{equation} \label{eq:3.4}
\left\{ \begin{array}{c}
n \\ j  \end{array}
\right\}_{1/2} = \frac{1}{(2j)!}\sum_{r=0}^{2j} \, {2j \choose r} \, (-1)^r \, (j-r)^{2n}.
\end{equation}
\end{lemma}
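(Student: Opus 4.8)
The plan is to start from the known representation formula \eqref{eq:1.4} for the Jacobi-Stirling numbers, specialize it to $\gamma = 1/2$, and then massage the resulting sum into the stated symmetric form \eqref{eq:3.4}. Setting $\gamma = 1/2$ in \eqref{eq:1.4} gives $2\gamma - 1 = 0$, so $\Gamma(r+2\gamma-1) = \Gamma(r)$, which is problematic at $r=0$; but the factor $2r+2\gamma-1 = 2r$ vanishes there, so the $r=0$ term drops out and the sum effectively runs from $r=1$ to $j$. For $r \ge 1$ we get $\Gamma(r) = (r-1)!$ and $\Gamma(j+r+2\gamma) = \Gamma(j+r+1) = (j+r)!$, together with $(r(r+2\gamma-1))^n = (r^2)^n = r^{2n}$. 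So \eqref{eq:1.4} collapses to
\begin{equation*}
\left\{ \begin{array}{c} n \\ j \end{array}\right\}_{1/2}
= \sum_{r=1}^j (-1)^{r+j}\,\frac{2r\,(r-1)!\, r^{2n}}{r!\,(j-r)!\,(j+r)!}
= \sum_{r=1}^j (-1)^{r+j}\,\frac{2\, r^{2n}}{(j-r)!\,(j+r)!}\,.
\end{equation*}

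Next I would recognize this as (half of) a symmetric sum over $r \in \{-j,\dots,j\}$. Since $r^{2n}$ is even in $r$ and the denominator $(j-r)!\,(j+r)!$ is symmetric under $r \mapsto -r$, while $(-1)^{r+j}$ is also invariant, summing over negative $r$ reproduces the positive-$r$ terms; the $r=0$ term contributes $0^{2n}=0$ for $n\ge 1$ (and for $n=0$ one checks \eqref{eq:3.4} directly against $\{^0_j\}_{1/2}=\delta_{0,j}$, or reindexes carefully). Hence
\begin{equation*}
\left\{ \begin{array}{c} n \\ j \end{array}\right\}_{1/2}
= \sum_{r=-j}^{j} (-1)^{r+j}\,\frac{r^{2n}}{(j-r)!\,(j+r)!}\,.
\end{equation*}
Now substitute $s = j - r$, so $r = j - s$ and $r$ ranges over $-j,\dots,j$ as $s$ ranges over $0,\dots,2j$. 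Then $(-1)^{r+j} = (-1)^{2j-s} = (-1)^s$, the denominator becomes $s!\,(2j-s)! = (2j)!/\binom{2j}{s}$, and $r^{2n} = (j-s)^{2n}$. This yields exactly
\begin{equation*}
\left\{ \begin{array}{c} n \\ j \end{array}\right\}_{1/2}
= \frac{1}{(2j)!}\sum_{s=0}^{2j}\binom{2j}{s}(-1)^s (j-s)^{2n}\,,
\end{equation*}
which is \eqref{eq:3.4}.

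An alternative, perhaps cleaner route that avoids the $r=0$ subtlety is to verify \eqref{eq:3.4} directly via the recurrence \eqref{eq:1.3}: one checks that the right-hand side of \eqref{eq:3.4} satisfies the initial conditions $\{^n_0\}_{1/2} = \delta_{n,0}$ and $\{^0_j\}_{1/2} = \delta_{0,j}$ (the latter from the binomial identity $\sum_s \binom{2j}{s}(-1)^s(j-s)^0 = (1-1)^{2j} = 0$ for $j\ge 1$), and then that it satisfies the recursion $f(n,j) = f(n-1,j-1) + j^2 f(n-1,j)$ — here $j(j+2\gamma-1) = j^2$. Since the solution of \eqref{eq:1.3} is unique, this proves the claim. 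I expect the main obstacle in either approach to be bookkeeping: in the first route, handling the degenerate $\Gamma(r)$ and the boundary terms at $r=0$ and $n=0$ cleanly; in the second, pushing the binomial/Vandermonde-type manipulations through the recursion step, which requires rewriting $(j-s)^{2n}$ in terms of shifted arguments and reconciling the index ranges $0,\dots,2j$ versus $0,\dots,2(j-1)$. I would present the first (direct reduction from \eqref{eq:1.4}) as the primary proof since it is self-contained and transparent, with the symmetrization step being the one place to argue carefully.
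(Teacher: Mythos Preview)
Your proof is correct and follows essentially the same route as the paper: specialize \eqref{eq:1.4} to $\gamma=1/2$ to reach $2\sum_{r=0}^j (-1)^{r+j} r^{2n}/((j-r)!(j+r)!)$, then reindex to recover the binomial form. Your symmetrization over $r\in\{-j,\dots,j\}$ followed by the shift $s=j-r$ is equivalent to the paper's split into two identical copies reindexed via $j-r\to r$ and $j+r\to r$, and you are in fact a little more careful than the paper about the $r=0$ and $n=0$ boundary cases.
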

\begin{proof}
Putting $\gamma = 1/2$ in \eqref{eq:1.4} we obtain
\[
\left\{ \begin{array}{c}
n \\ j  \end{array}
\right\}_{1/2} = 2 \sum_{r=0}^j (-1)^{r+j} \, \frac{r^{2n}}{(j-r)! (j+r)!}
\]
and further
\[
\left\{ \begin{array}{c}
n \\ j  \end{array}
\right\}_{1/2} = \frac{1}{(2j)!} \, \sum_{r=0}^j (-1)^{r+j}\,  {2j \choose j-r}\, r^{2n} +
\frac{1}{(2j)!} \, \sum_{r=0}^j (-1)^{r+j} \, {2j \choose j+r}\, r^{2n}.
\]
After making the index shifts $j - r \to r$ and $j + r \to r$, we obtain \eqref{eq:3.4}.
\end{proof}
In \cite[(3.4)]{3} the authors derived a formula connecting the Legendre-Stirling numbers $\big\{ ^n_j\big\}_1$ with the classical Stirling numbers $\big\{ ^n_j\big\}$. As a supplement we relate $\big\{ ^n_j\big\}_{1/2}$ to $\big\{ ^n_j\big\}$. To this end we start from \eqref{eq:3.4} in Lemma 3.2 giving
\begin{align*}
\left\{ \begin{array}{c}
n \\ j  \end{array}
\right\}_{1/2}
& = \frac{1}{(2j)!} \, \sum_{r=0}^{2j}\, {2j \choose r} \, (-1)^r\, \left( \frac{\partial}{\partial x}\right)^{2n}
    e^{(j-r)x}\Bigg|_{x = 0} \\*[0.3cm]
& = \frac{1}{(2j)!}\, \left( \frac{\partial}{\partial x}\right)^{2n} e^{jx} \, (1 - e^{-x})^{2j}\Bigg|_{x=0} \\*[0.3cm]
& = \left( \frac{\partial}{\partial x}\right)^{2n} e^{-jx} \, \frac{(e^x-1)^{2j}}{(2j)!}\Bigg|_{x=0} =
    \left( \frac{\partial}{\partial x}\right)^{2n} e^{-jx} \,\Phi_{2j} (x) \Bigg|_{x=0} \, ,
\end{align*}
where $\Phi_{j} (t) := \sum_{n=0}^\infty \, \frac{\big\{ ^n_j\big\}}{n!}\, t^n = \frac{(e^t-1)^j}{j!}$ denotes the vertical generating function of the Stirling numbers of the second kind \cite[p. 206]{9}. Continuing we find
\[
\left\{ \begin{array}{c}
n \\ j  \end{array} \right\}_{1/2} =
\sum_{\nu=0}^{2n} \, {2n\choose \nu} \, (-j)^{2n-\nu} \, \Phi_{2j}^{(\nu)} (0) =
\sum_{\nu=0}^{2n} \, {2n\choose \nu} \, (-j)^{2n-\nu} \,
\left\{   \begin{array}{c}
\nu \\ 2j \end{array} \right\}
\]
and
\begin{align}
    \left\{ \begin{array}{c} n \\ 0 \end{array} \right\}_{1/2}
& = \left\{ \begin{array}{c} 2n \\ 0 \end{array} \right\} = \delta_{n,0}\, ,  ~~
    \left\{ \begin{array}{c} n \\ j \end{array} \right\}_{1/2} 
 = j^{2n} \Delta_x^{2n}
    %\frac{\Big\{ ^x_{2j}\Big\}}{j^x}\Bigg|_{x=0}, \quad \text{when } j\geq 1,
    \frac{\left\{\begin{array}{c} x \\2j \end{array} \right\}}{j^x}\Bigg|_{x=0}, \quad \text{when } j\geq 1,
\end{align}
with an obvious meaning of the forward difference operator $\Delta_x$ acting on $x$. \\ \

Next we consider the modified Chebyshev-Stirling numbers $(2j)! \big\{ ^n_j\big\}_{1/2}$ and its horizontal generating function
\begin{equation} \label{eq:3.6}
L_n (s):= \sum_{j=0}^n (2j)! 
\left\{ \begin{array}{c}
n \\ j  \end{array} 
\right\}_{1/2} s^j, \qquad s\in\C,
\end{equation}
which for our purpose is appropriate rather than the generating function for the numbers $\big\{ ^n_j\big\}_{1/2}$. In case of the Stirling numbers see \cite[p. 108, example 5.4, c]{4} and Lemma 3.6 below. Generating functions, connection formulae with the Euler-Frobenius polynomials and the Lindel\"of-Wirtinger expansion are given by
\begin{lemma} \label{lem3.3} 
We have
\begin{flalign}
\label{eq:3.7} i)   & ~~ \sum_{n=0}^\infty \left\{ \begin{array}{c} n \\ j \end{array} \right\}_{1/2}\,
          \frac{t^{2n}}{(2n)!} = \frac{2^{2j}}{(2j)!}\, 
               \left( \sinh \frac{t}{2}\right)^{2j}, \qquad j\in \N_0 ,   \\ 
\label{eq:3.8} ii)  & ~~\sum_{n=0}^\infty L_n (s) \frac{t^{2n}}{(2n)!} = 
         \frac{1}{1-4s \sinh^2 (t/2)}\, ,   \\
\label{eq:3.9} iii) & ~~L_n \left( \frac{1}{2(\cosh w-1)}\right) = (2n)! \, \frac{2(\cosh w-1)}{\sinh w}\,
               \sum_{m=-\infty}^\infty\, \frac{1}{(w+2\pi im)^{2n+1}}\, ,  \\
\label{eq:3.10} iv)  & ~~ L_n \left( \frac{z}{(1-z)^2}\right) = \frac{2}{1+z}\, \frac{P_{2n} (z)}{(1-z)^{2n}}\, ,
\end{flalign} 
with obvious restrictions for the complex variables $s,t,w,z$. 
\end{lemma}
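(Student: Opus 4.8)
The plan is to establish the four assertions in sequence, each feeding into the next, using no analytic input beyond Lemmas~\ref{lem3.1} and~\ref{lem3.2} and the classical partial-fraction expansion of $\coth$. For i) I would start from the explicit formula \eqref{eq:3.4}, which holds for \emph{all} $n,j\in\N_0$, multiply by $t^{2n}/(2n)!$ and sum over $n\ge 0$. Pulling the $n$-sum inside the finite $r$-sum and using $\sum_{n\ge 0}\big((j-r)t\big)^{2n}/(2n)! = \cosh\big((j-r)t\big)$ turns the left side into $\frac{1}{(2j)!}\sum_{r=0}^{2j}\binom{2j}{r}(-1)^r\cosh\big((j-r)t\big)$. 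Writing $\cosh$ in exponentials and applying the binomial theorem to the two resulting sums produces $e^{jt}(1-e^{-t})^{2j}$ and $e^{-jt}(e^{t}-1)^{2j}$; since the exponent $2j$ is even, each of these equals $\big(2\sinh(t/2)\big)^{2j}$, and averaging gives \eqref{eq:3.7}.

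For ii) I would insert the definition \eqref{eq:3.6} of $L_n$ into $\sum_{n\ge 0}L_n(s)\,t^{2n}/(2n)!$ and interchange the double sum to collect by $j$; this is legitimate for small real $s>0$ and $t$, where every term is nonnegative, and the general identity then follows by analytic continuation. Since $\big\{ ^n_j\big\}_{1/2}=0$ for $n<j$, the inner $n$-sum runs over all of $\N_0$, so i) applies and what remains is the geometric series $\sum_{j\ge 0}\big(4s\sinh^2(t/2)\big)^{j}=\big(1-4s\sinh^2(t/2)\big)^{-1}$, which is \eqref{eq:3.8}.

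For iii) I would specialize \eqref{eq:3.8} to $s=\big(2(\cosh w-1)\big)^{-1}$; using $4\sinh^2(t/2)=2(\cosh t-1)$, the right-hand side of \eqref{eq:3.8} collapses to $\frac{\cosh w-1}{\cosh w-\cosh t}$. On the other hand, expanding the Eisenstein series on the right-hand side of the claimed identity \eqref{eq:3.9} as a geometric series in $t^{2}$ gives $\frac{2(\cosh w-1)}{\sinh w}\sum_{m}\frac{w+2\pi i m}{(w+2\pi i m)^{2}-t^{2}}$; splitting each summand by partial fractions, invoking the classical expansion $\sum_{m\in\Z}(v+2\pi i m)^{-1}=\tfrac12\coth(v/2)$ with $v=w\mp t$, and then using $\coth A+\coth B=\sinh(A+B)/(\sinh A\sinh B)$ together with $\sinh\tfrac{w-t}{2}\sinh\tfrac{w+t}{2}=\tfrac12(\cosh w-\cosh t)$, reproduces exactly $\frac{\cosh w-1}{\cosh w-\cosh t}$. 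Equating the two expressions and comparing coefficients of $t^{2n}/(2n)!$ yields \eqref{eq:3.9}.

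Finally, for iv) I would put $z=e^{-w}$ in \eqref{eq:3.9} (valid in a suitable punctured neighbourhood of $z=1$, then extended by analyticity): a short computation gives $\big(2(\cosh w-1)\big)^{-1}=z/(1-z)^{2}$ and $2(\cosh w-1)/\sinh w=2(1-z)/(1+z)$, while Lemma~\ref{lem3.1}(ii) in the form \eqref{eq:3.3} identifies $\sum_{m}(w+2\pi i m)^{-(2n+1)}$ with $\frac{1}{(2n)!}\frac{P_{2n}(z)}{(1-z)^{2n+1}}$; substituting and simplifying the powers of $1-z$ gives \eqref{eq:3.10} for $n\ge 1$ (the range in which \eqref{eq:3.3} holds; the case $n=0$ is immediate). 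I expect the main obstacle to be the bookkeeping in step iii): one must justify interchanging the sum over $n$ with the sum over $m$ in the Eisenstein series, and then carry the partial-fraction and hyperbolic manipulations through carefully so that the ``Eisenstein side'' of \eqref{eq:3.9} visibly collapses to the elementary closed form coming from \eqref{eq:3.8}. Steps i), ii) and iv) are essentially routine once i) and iii) are in place.
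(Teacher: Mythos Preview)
Your argument is correct and, for parts i), ii) and iv), it is essentially the paper's argument spelled out in full. The genuine difference is in part iii). The paper, after substituting $s=\big(2(\cosh w-1)\big)^{-1}$ into \eqref{eq:3.8} to obtain the closed form $(\cosh w-1)/(\cosh w-\cosh t)$, extracts the coefficient of $t^{2n}/(2n)!$ by Cauchy's integral formula and residue calculus: the poles of $(\cosh w-\cosh t)^{-1}$ in $t$ sit at $t=\pm(w+2\pi i m)$, and summing the residues produces the Eisenstein series in \eqref{eq:3.9} directly. You instead compute the $t$-generating function of the \emph{right-hand} side of \eqref{eq:3.9} via the geometric series in $t^2$, partial fractions, and the classical Mittag--Leffler identity $\sum_{m\in\Z}(v+2\pi i m)^{-1}=\tfrac12\coth(v/2)$, and then match the two closed forms. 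Your route avoids contour integration altogether but imports the $\coth$ expansion as an outside fact and requires a word about symmetric (Eisenstein) summation when you split $\sum_m a_m/(a_m^2-t^2)$ into two conditionally convergent pieces; the paper's route is more self-contained (the series is \emph{produced} by the residues) at the cost of setting up a contour argument. Both are short once ii) is available.

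One small caveat: your parenthetical ``the case $n=0$ is immediate'' in iv) is not quite right, since with $P_0\equiv 1$ one has $L_0\equiv 1$ but $\tfrac{2}{1+z}\cdot P_0(z)/(1-z)^0=\tfrac{2}{1+z}$. Identity \eqref{eq:3.10} is really meant for $n\ge 1$, in line with the hypothesis of Lemma~\ref{lem3.1} and the paper's blanket ``obvious restrictions'' clause; just drop the $n=0$ remark.
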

Regarding (3.7) it should be mentioned that the Chebyshev-Stirling numbers are related to the central factorial numbers of odd and even indices for which various generating functions have been computed in the literature, see for instance
\cite[section 4]{17}, \cite[chapter 6.5]{34}.
\begin{proof}
Part i) readily follows from Lemma 3.2 which implies ii) directly. The Lindel\"of-Wirtinger expansion in iii) results from ii) by putting $s = 1/2(\cosh w - 1)$ and a routine application of Cauchy's formula for the coefficients of power series in combination with residue calculus. Finally we obtain iv) from iii) and \eqref{eq:3.3} by observing that $z = e^{-w}$.
\end{proof}
Further, using elementary conformal mapping properties of the quadratic transformation \linebreak[4] $s = z/(1-z)^2$ and the connection formula \eqref{eq:3.10} from Lemma 3.1, i) we conclude
\begin{lemma} \label{lem3.4}
All zeros of $L_n$ are real, simple and they are located in the interval $\big( - \frac{1}{4},0\big]$.
\end{lemma}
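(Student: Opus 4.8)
The plan is to deduce the assertion from Lemma~\ref{lem3.1}~i) by pulling $L_n$ back along the quadratic substitution $s=\phi(z):=z/(1-z)^2$ that occurs in the connection formula \eqref{eq:3.10}. First I would collect the elementary conformal properties of $\phi$. A short computation shows that $\phi(z)=\phi(w)$ precisely when $z=w$ or $zw=1$, and that $\phi(e^{i\theta})=-\tfrac14\csc^2(\theta/2)$, so $\phi(\partial\DD)=(-\infty,-\tfrac14]$; since $\phi'(z)=(1+z)/(1-z)^{3}$ has no zero in the open unit disk $\DD$, the map $\phi$ restricts to a biholomorphism of $\DD$ onto the slit plane $\C\setminus(-\infty,-\tfrac14]$, with $\phi(0)=0$ and with $\phi$ carrying the real segment $(-1,0)$ strictly increasingly onto $(-\tfrac14,0)$.

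Next I would rewrite \eqref{eq:3.10} as
\[
L_n(\phi(z))=\frac{2\,P_{2n}(z)}{(1+z)(1-z)^{2n}}\,.
\]
On $\DD$ the right-hand side is holomorphic, its only poles (at $z=1$) lying on $\partial\DD$, so its zeros in $\DD$ coincide, with multiplicities, with the zeros of $P_{2n}$ in $\DD$. By Lemma~\ref{lem3.1}~i) applied with the even index $2n$, the polynomial $P_{2n}$ has the simple zeros $z=0$ and $z=-1$, together with $n-1$ further simple zeros in $(-1,0)$ and $n-1$ in $(-\infty,-1)$; of these, only $z=0$ and the $n-1$ zeros in $(-1,0)$ lie in $\DD$. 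Hence $L_n\circ\phi$ has exactly $n$ zeros in $\DD$, all simple, all contained in $(-1,0]$.

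Now I would transfer this count through the biholomorphism $\phi\colon\DD\to\C\setminus(-\infty,-\tfrac14]$: composition with a biholomorphism preserves the order of vanishing, so the zeros of $L_n$ in $\C\setminus(-\infty,-\tfrac14]$ correspond bijectively, with matching multiplicities, to the zeros of $L_n\circ\phi$ in $\DD$. Therefore $L_n$ has exactly $n$ zeros in $\C\setminus(-\infty,-\tfrac14]$, they are simple, and they lie in $\phi(\{0\}\cup(-1,0))=\{0\}\cup(-\tfrac14,0)=(-\tfrac14,0]$. Since $L_n$ has degree $n$ (by \eqref{eq:3.6}, with nonzero leading coefficient $(2n)!$ in view of the recurrence \eqref{eq:1.3}), these are all of its zeros, and the lemma follows.

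The only steps that require genuine care are the two structural facts about $\phi$: that $\phi$ really is a bijection of $\DD$ onto $\C\setminus(-\infty,-\tfrac14]$ (this uses the fibre relation $\phi(z)=\phi(w)\Leftrightarrow zw=1$ or $z=w$ together with the boundary identity $\phi(e^{i\theta})=-\tfrac14\csc^2(\theta/2)$), and that the pole at $z=1$ of $L_n\circ\phi$ and the zero of $P_{2n}$ at $z=-1$ both sit on $\partial\DD$, so that neither affects the zero count in the interior of $\DD$; once these are settled the proof is a pure transfer of Lemma~\ref{lem3.1}~i). A slightly more computational alternative is to clear denominators, regard $(1-z)^{2n}L_n(\phi(z))=2P_{2n}(z)/(1+z)$ as a polynomial identity of degree $2n-1$, and match the $n-1$ quadratic factors $z-s_i(1-z)^2$ --- whose root pairs $\{z_i,1/z_i\}$ are the $\phi$-fibres of the $n-1$ nonzero zeros $s_i$ of $L_n$ --- against the $2n-2$ negative real zeros of $P_{2n}$ other than $-1$, each such fibre being forced to split into one point of $(-1,0)$ and one of $(-\infty,-1)$.
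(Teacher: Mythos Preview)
Your proof is correct and follows precisely the route the paper indicates: the paper's ``proof'' is only a one-sentence sketch that invokes the conformal mapping properties of $s=z/(1-z)^2$ together with \eqref{eq:3.10} and Lemma~\ref{lem3.1}~i), while you have carried out exactly these steps in full detail (fibre structure $zw=1$, boundary image $(-\infty,-\tfrac14]$, the zero count for $P_{2n}$ inside $\DD$, and the degree-$n$ tally for $L_n$).
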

Next, we assume that the reader is familiar with the concept of unimodality of sequences, for example \cite[section 7.1]{9}. Using Lemma \ref{lem3.4} in combination with a well-known criterion for unimodality, for example \cite[Theorem B, p. 270]{9}, we immediately obtain a supplement of the unimodality property of the Jacobi-Stirling numbers \cite[Theorem 5.4]{3}.
\begin{theorem} \label{theo3.5}
If $n\geq 3$, the numbers $(2j!)\, \big\{ ^n_j\big\}_{1/2}$, $0 \leq j \leq n$, are unimodal with either a peak or a plateau of two points.
\end{theorem}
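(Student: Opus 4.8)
The plan is to reduce Theorem \ref{theo3.5} to a standard fact: if a polynomial with non-negative coefficients has only real zeros, then its coefficient sequence is log-concave (hence unimodal), and if in addition the zeros are simple one can control the length of any plateau. First I would recall that the coefficients of the horizontal generating function $L_n(s)$ in \eqref{eq:3.6} are precisely the numbers $(2j)!\,\big\{{}^n_j\big\}_{1/2}$ for $0\le j\le n$; since these numbers are non-negative (they are obtained from the $c_j^\gamma(n,0)$ in \eqref{Jacobi powers special case}, or directly from the recurrence \eqref{eq:1.3}), $L_n$ is a genuine polynomial of degree $n$ with non-negative coefficients. By Lemma \ref{lem3.4}, all $n$ of its zeros are real, simple, and lie in $\big(-\tfrac14,0\big]$; in particular $s=0$ is a zero, which accounts for the vanishing coefficient at $j=0$ when $n\ge 1$.

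Next I would invoke the classical Newton-type criterion cited in the paper, \cite[Theorem B, p.\ 270]{9}: a finite sequence of non-negative reals that arises as the coefficient sequence of a real-rooted polynomial is log-concave with no internal zeros, and therefore unimodal. Applied to $L_n$, this immediately yields that the sequence $(2j)!\,\big\{{}^n_j\big\}_{1/2}$, $0\le j\le n$, is unimodal. The only remaining point is to show that the ``flat'' part of the mode consists of at most two consecutive indices, i.e.\ one cannot have $a_{j-1}=a_j=a_{j+1}$ (where $a_j:=(2j)!\,\big\{{}^n_j\big\}_{1/2}$) for the maximal value. This is exactly where simplicity of the zeros enters: strict log-concavity $a_{j-1}a_{j+1}<a_j^2$ at interior indices would forbid three equal consecutive terms. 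So I would either quote the sharpened form of the unimodality criterion (real and \emph{simple} zeros give strict log-concavity in the interior of the support), or argue directly that a triple $a_{j-1}=a_j=a_{j+1}>0$ forces a repeated factor in $L_n$, contradicting Lemma \ref{lem3.4}. Finally, for $n\ge 3$ the polynomial $L_n$ has degree $\ge 3$ with a simple zero at $0$, so at least two coefficients are strictly positive and the mode is a genuine interior phenomenon, which is why the hypothesis $n\ge 3$ appears.

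The main obstacle is the passage from ``log-concave, hence unimodal'' to the precise statement ``a peak or a plateau of exactly two points.'' The generic unimodality criterion only gives weak log-concavity; to rule out a plateau of length three or more one genuinely needs the simplicity of the roots, and one must be a little careful because some of the early coefficients can vanish (for $j$ near $0$) — the log-concavity inequality $a_{j-1}a_{j+1}\le a_j^2$ is only informative where the relevant terms are positive. I would handle this by first locating the support $\{j: a_j>0\}$, noting it is an interval $[0 \text{ or } 1,\, n]$ of consecutive integers (no internal zeros, by the criterion), and then applying strict log-concavity on that interval; three equal consecutive positive terms would then contradict strictness. The rest of the argument is routine bookkeeping once Lemma \ref{lem3.4} is in hand.
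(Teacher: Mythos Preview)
Your approach is exactly the paper's: invoke Lemma~\ref{lem3.4} and then the real-rootedness criterion \cite[Theorem~B, p.~270]{9}; the paper states this in one line and you have simply unpacked it. One small correction to your elaboration: the strict log-concavity $a_j^2>a_{j-1}a_{j+1}$ (which rules out a three-term plateau) already follows from Newton's inequalities for \emph{any} real-rooted polynomial with nonnegative coefficients, via the extra factor $\frac{(j+1)(n-j+1)}{j(n-j)}>1$, so simplicity of the zeros is not actually what drives that step.
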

Finally, according to the Chebyshev-Stirling numbers above and following Bender \cite[p. 108]{4} for the modified Stirling numbers $j!\,\big\{ ^n_j\big\}$, we consider its horizontal generating function
\begin{equation} \label{eq:3.11}
Q_n (s) := \sum_{j=0}^n j! \, \left\{ \begin{array}{c} n \\ j \end{array} \right \}\, s^j, \qquad s\in\C.
\end{equation}
\begin{lemma} \label{lem3.6}
We have
\begin{flalign}
\label{eq:3.12} i)   & ~~\sum_{n=0}^\infty Q_n (s) \frac{t^n}{n!} = \frac{1}{1-s(e^t-1)}\, , \\
\label{eq:3.13} ii)  & ~~Q_n \left( \frac{1}{e^w-1}\right) = n!\, \big( 1-e^{-w}\big) \sum_{m=-\infty}^\infty\,
             \frac{1}{(w+2\pi im)^{n+1}}\, , \\
\label{eq:3.14} iii) & ~~Q_n\left( \frac{z}{1-z}\right) = \frac{P_n (z)}{(1-z)^n}\, ,                         
\end{flalign}
with obvious restrictions for the complex variables $s,t,w,z$.
\end{lemma}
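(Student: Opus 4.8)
The plan is to follow the same three-step route as in the proof of Lemma~\ref{lem3.3} --- vertical generating function, Lindel\"of--Wirtinger expansion, Euler--Frobenius connection formula --- now with the classical vertical generating function $\Phi_j(t)=(e^t-1)^j/j!$, recalled above, in place of $2^{2j}(\sinh(t/2))^{2j}/(2j)!$. For part i) I would interchange the two summations in $\sum_{n\ge0}Q_n(s)\,t^n/n!$, using the definition \eqref{eq:3.11} of $Q_n$, to obtain $\sum_{j\ge0}j!\,s^j\,\Phi_j(t)=\sum_{j\ge0}(s(e^t-1))^j$, and then sum the geometric series; the interchange is legitimate whenever $|s(e^t-1)|<1$, and this proves \eqref{eq:3.12}.

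For part ii) I would substitute $s=1/(e^w-1)$ in i). Then $s(e^t-1)=(e^t-1)/(e^w-1)$, so that $\sum_{n\ge0}Q_n(1/(e^w-1))\,t^n/n!=(e^w-1)/(e^w-e^t)$. As a function of $t$ the right-hand side is meromorphic, its only singularities being simple poles at $t=w+2\pi im$, $m\in\Z$, each with residue $-(1-e^{-w})$. Extracting the coefficient of $t^n$ by Cauchy's formula and collecting the residue contributions --- equivalently, reading the Taylor coefficients off the Mittag--Leffler expansion of $(e^w-e^t)^{-1}$ in $t$ --- gives, for $n\ge1$, the asserted formula $Q_n(1/(e^w-1))=n!\,(1-e^{-w})\sum_{m}(w+2\pi im)^{-(n+1)}$; the additive constant of the Mittag--Leffler expansion is immaterial, since it contributes only to the coefficient of $t^0$.

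For part iii) I would put $z=e^{-w}$, so that $1/(e^w-1)=z/(1-z)$ and $1-e^{-w}=1-z$, and combine ii) with the Lindel\"of--Wirtinger expansion \eqref{eq:3.3}, that is, with $n!\sum_{m}(w+2\pi im)^{-(n+1)}=P_n(z)/(1-z)^{n+1}$; multiplying through by $1-z$ then yields $Q_n(z/(1-z))=P_n(z)/(1-z)^n$. A self-contained alternative for iii) starts from i) together with the classical expansion $\sum_{\nu\ge0}\nu^n z^\nu=\sum_{j\ge0}j!\,\big\{ ^n_j\big\}\,z^j/(1-z)^{j+1}$ --- which follows from $\nu^n=\sum_j\big\{ ^n_j\big\}\,j!\binom{\nu}{j}$ and $\sum_\nu\binom{\nu}{j}z^\nu=z^j/(1-z)^{j+1}$ --- and then compares $P_n(z)/(1-z)^{n+1}=(1-z)^{-1}Q_n(z/(1-z))$ with the defining relation \eqref{eq:3.1} for $P_n$.

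All of these manipulations are short; the one step that requires genuine care, and hence the main (if mild) obstacle, is the justification of the term-by-term residue summation in ii): one has to verify that the Cauchy integrals of $(e^w-e^t)^{-1}$ over a suitable sequence of circles $|t|=R_k\to\infty$ kept away from the poles tend to zero, and that the resulting doubly infinite series converges absolutely for $n\ge1$. This is precisely the estimate already underpinning \eqref{eq:3.2}, \eqref{eq:3.3} and the proof of Lemma~\ref{lem3.3} iii), so it can be taken over verbatim.
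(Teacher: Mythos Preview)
Your proposal is correct and follows essentially the same route as the paper: the paper cites Bender for i), and for ii) and iii) explicitly says the reasoning is ``very similar'' to that of Lemma~\ref{lem3.3}~iii), iv) --- namely, substitute $s=1/(e^w-1)$ in i) and apply Cauchy's formula with residue calculus, then set $z=e^{-w}$ and invoke \eqref{eq:3.3}. Your write-up actually supplies the details the paper omits, and your alternative self-contained argument for iii) is a nice bonus.
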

Part i) has been taken from Bender's classic paper \cite[p. 108]{4} and the reasonings for formula (3.13), (3.14) are very similar to those for (3.9), (3.10); therefore we omit the details. Further, again on the basis of Lemma \ref{lem3.1}, i) and the connection formula (3.14) we may infer
\begin{lemma} \label{lem3.7}
All zeros of $Q_n$ are real, simple and they are located in the interval $(-1,0]$.
\end{lemma}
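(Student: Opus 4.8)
The plan is to mimic exactly the argument that was used (implicitly) for $L_n$ in Lemma~\ref{lem3.4}, now transported through the linear fractional map $z \mapsto z/(1-z)$ rather than the quadratic map $z \mapsto z/(1-z)^2$. From Lemma~\ref{lem3.1}~i) we know that $P_n$ has $n$ real, simple, non-positive zeros, one of which is $z=0$; say the nonzero ones are $z_1,\dots,z_{n-1} < 0$. The connection formula \eqref{eq:3.14} reads $Q_n\!\left(\tfrac{z}{1-z}\right) = P_n(z)/(1-z)^n$, and since $(1-z)^{-n}$ has no zeros in $\C\setminus\{1\}$, the zeros of $z \mapsto Q_n(z/(1-z))$ are precisely the $z_k$. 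So the whole task reduces to understanding the image of the set $\{z_1,\dots,z_{n-1}\}\cup\{0\}$ under $\varphi(z) := z/(1-z)$.

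First I would record the elementary mapping properties of $\varphi$. It is a Möbius transformation with $\varphi(0)=0$, $\varphi(-1) = -1/2$, $\varphi(z)\to -1$ as $z\to-\infty$, and $\varphi$ restricted to the real line is strictly increasing on $(-\infty,1)$ (its derivative is $1/(1-z)^2 > 0$). Hence $\varphi$ maps the interval $(-\infty,0]$ bijectively and monotonically onto $(-1,0]$. Since every zero $z_k$ of $P_n$ lies in $(-\infty,0)$ by Lemma~\ref{lem3.1}~i), each $s_k := \varphi(z_k)$ lies in $(-1,0)$, and $\varphi(0)=0$ gives the endpoint zero. Monotonicity and injectivity of $\varphi$ on $(-\infty,1)$ guarantee that distinct $z_k$ map to distinct $s_k$, so the zeros of $Q_n$ remain simple; and counting gives exactly $n$ of them (the $n-1$ images of the nonzero $z_k$ together with $0$), matching $\deg Q_n = n$. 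This yields all three assertions: real, simple, and located in $(-1,0]$.

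I would then assemble these observations into a short proof: invoke Lemma~\ref{lem3.1}~i) for the location and multiplicity of the zeros of $P_n$, apply \eqref{eq:3.14} to transfer them to zeros of $Q_n\circ\varphi$, use the explicit monotone bijection $\varphi:(-\infty,0]\to(-1,0]$ to conclude, and finish with the degree count to be sure no zeros are lost or gained. One small point to check is the behaviour at the boundary: $\varphi$ has its pole at $z=1$, which is safely to the right of all the zeros of $P_n$ and of the relevant interval, so no zero escapes to infinity; and the left endpoint $-1$ of the target interval is the asymptotic value $\varphi(-\infty)$, never attained, consistent with the open endpoint.

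The argument is essentially routine — this is the linear analogue of Lemma~\ref{lem3.4} and the paper itself signals as much. The only place demanding the slightest care is making the "transfer of zeros" rigorous: one must note that $(1-z)^{-n}$ is nonvanishing and holomorphic on $\C\setminus\{1\}$ so that $P_n(z)$ and $P_n(z)/(1-z)^n$ have the same zeros with the same multiplicities in that region, and that the substitution $s = \varphi(z)$ is a biholomorphism from a neighbourhood of $(-\infty,0]$ (minus $z=1$, which is not an issue) onto a neighbourhood of $(-1,0]$, so multiplicities are preserved under the change of variable. Beyond that bookkeeping there is no genuine obstacle; the heavy lifting was already done in establishing Lemma~\ref{lem3.1}~i).
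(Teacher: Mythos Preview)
Your proposal is correct and follows exactly the route the paper indicates: the paper itself merely says that the lemma is obtained ``on the basis of Lemma~\ref{lem3.1}, i) and the connection formula (3.14),'' and your argument supplies precisely the conformal-mapping details implicit in that sentence. The analysis of the M\"obius map $\varphi(z)=z/(1-z)$ as a monotone bijection from $(-\infty,0]$ onto $(-1,0]$, together with the degree count, is the intended content of that one-line reference.
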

Concluding this section we mention the unimodality of the sequence $j!\, \big\{ ^n_j\big\}$, $0 \leq j \leq n$, which already is due to Bender \cite[p. 309]{4}. This also is a consequence of Lemma \ref{lem3.7} by the same argument applied above.
\section{Probabilistic tools from central limit theory}
The probabilistic point of view provides one of numerous methods for computing  asymptotics for sequences of positive numbers
\cite{4}, \cite{6}, \cite{9}, \cite{14}, \cite{16}, \cite{19}, \cite{35}, \cite{37}. In this section we prepare the relevant terminology and give a general auxiliary result. \\

Having in mind the two main objects to be treated below, given by the generating polynomials $L_n$ in \eqref{eq:3.6} and $Q_n$ in (3.11), we consider polynomials
\begin{equation} \label{eq:4.1}
A_n (z) := \sum_{j=0}^n \alpha_{nj} z^j = \alpha_{nn} \prod_{\nu=1}^n (z+x_{n\nu})
\end{equation}
having real and non-positive zeros only, that is $x_{n\nu} \geq 0$, $\nu = 1,\ldots,n$. We ask for asymptotics of the coefficients 
$\alpha_{nj}$, as $n \to\infty$, uniformly in $j$. Rewriting \eqref{eq:4.1} as
\begin{equation} \label{eq:4.2}
\frac{A_n (z)}{A_n (1)} = \prod_{\nu=1}^n (p_{n\nu} z + 1 - p_{n\nu}),
\end{equation}
where $p_{n\nu} := 1/(1+x_{n\nu})$, the polynomials in \eqref{eq:4.2} may be regarded as the generating function of the row sums of a triangular array of Bernoulli random variables
\begin{equation} \label{eq:4.3}
\begin{array}{ccccc}
X_{11} \\
X_{21} & X_{22} \\
\vdots &        & \ddots \\
X_{n1} & X_{n2} & \ldots & X_{nn} \\
\vdots & \vdots &        &        & \ddots
\end{array}
\end{equation}
Here, due to the factorization in \eqref{eq:4.2}, the entries are row-wise independent with distributions given by
\begin{equation} \label{eq:4.4}
P(X_{n\nu} = 1) = p_{n\nu}, \quad P(X_{n\nu} = 0) = 1 - p_{n\nu}
\end{equation}
with numbers $p_{n\nu} \in [0,1]$. \\ 

In \cite{16} an asymptotic expansion for
\begin{equation} \label{eq:4.5}
p(n,j) := P(S_n = j)
\end{equation}
has been derived, where
\begin{equation} \label{eq:4.6}
S_n := \sum_{\nu=1}^n X_{n\nu}
\end{equation}
denote the row sums in the scheme \eqref{eq:4.3}. This has been done by a modification of a standard local central limit theorem for ``simple sums'' $\sum_{\nu=1}^n X_\nu$, where each component $X_\nu$ has a lattice distribution \cite{29}. Prior to a formulation we need some notation and conditions for the Bernoulli scheme \eqref{eq:4.3} with \eqref{eq:4.4} - \eqref{eq:4.6}. Let $\mu_n := E(S_n)$ and $\sigma_n^2 := \text{Var }(S_n)$ be the expectation and variance of $S_n$ respectively for which we assume that
\begin{equation} \label{eq:4.7}
\liminf_{n\to\infty} \frac{\sigma_n^2}{n} > 0.
\end{equation}
Further, suppose that the normalized cumulants of $S_n$ are defined by
\begin{equation} \label{eq:4.8}
\lambda_{\nu,n} := \frac{n^{(\nu-2)/2}}{\sigma_n^\nu} \, \frac{1}{i^\nu} 
\left( \frac{d}{dt}\right)^\nu \log E(e^{itS_n})\Big|_{t=0}\, ,
\end{equation}
$n,\nu\in\N, \nu\geq 2$, where $E(e^{itS_n})$ is the characteristic function of $S_n$ and $\log$ is that branch of the logarithm on the cut plane $\C\setminus (-\infty,0]$ satisfying $\log 1 = 0$. Finally we introduce the functions
\begin{equation} \label{eq:4.9}
q_{\nu,n} (x) := \frac{1}{\sqrt{2\pi}}~ e^{-x^2/2} 
\sum_{\mu_1 + 2\mu_2 + \ldots + \nu\mu_\nu = \nu} H_{\nu+2s} (x) \prod_{m=1}^\nu \, \frac{1}{\mu_m!}\,
\left( \frac{\lambda_{m+2,n}}{(m+2)!}\right)^{\mu_m},
\end{equation}
$x\in \R, n,\nu\in\N$, where $s = \mu_1 + \ldots + \mu_\nu$, and the modified Hermite polynomials are defined by
\begin{equation} \label{eq:4.10}
H_m (x) := (-1)^m e^{x^2/2} \left( \frac{d}{dx}\right)^m e^{-x^2/2},
\end{equation}
$x\in\R, m\in\N_0$. Now from \cite[Lemma 3.1]{16} we take our basic auxiliary result given by the following local central limit theorem for triangular Bernoulli arrays.
\begin{lemma} \label{lem4.1}
Assuming the above notations and the condition \eqref{eq:4.7}, then for every $k \geq 2$, we have
\begin{equation} \label{eq:4.11}
\sigma_n p(n,j) = \frac{1}{\sqrt{2\pi}}~ e^{-x^2/2} + \sum_{\nu=1}^{k-2}\, \frac{q_{\nu,n} (x)}{n^{\nu/2}} +
o \left( \frac{1}{n^{(k-2)/2}}\right),
\end{equation}
as $n\to\infty$, uniformly in $j \in \Z$, where $x = (j - \mu_n)/\sigma_n$.
\end{lemma}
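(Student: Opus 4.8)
The plan is to prove \eqref{eq:4.11} by the classical characteristic–function (Fourier inversion) method for lattice distributions, adapted to the triangular Bernoulli scheme \eqref{eq:4.3}. Since each $X_{n\nu}$ is $\{0,1\}$-valued, $S_n$ is supported on $\Z$ and its characteristic function $\varphi_n(t):=E(e^{itS_n})=\prod_{\nu=1}^n\bigl(1-p_{n\nu}+p_{n\nu}e^{it}\bigr)$ is $2\pi$-periodic, so $p(n,j)=\frac{1}{2\pi}\int_{-\pi}^{\pi}e^{-itj}\varphi_n(t)\,dt$. Writing $x=(j-\mu_n)/\sigma_n$, substituting $t=s/\sigma_n$, and introducing the standardized characteristic function $\psi_n(s):=e^{-is\mu_n/\sigma_n}\varphi_n(s/\sigma_n)=E\bigl(e^{is(S_n-\mu_n)/\sigma_n}\bigr)$, one gets
\[
\sigma_n\,p(n,j)=\frac{1}{2\pi}\int_{-\pi\sigma_n}^{\pi\sigma_n}e^{-isx}\psi_n(s)\,ds .
\]
The right-hand side of \eqref{eq:4.11} is precisely the Fourier inversion of the truncated Edgeworth series, because $\frac{1}{2\pi}\int_{\R}(is)^m e^{-s^2/2}e^{-isx}\,ds=\frac{1}{\sqrt{2\pi}}e^{-x^2/2}H_m(x)$ with $H_m$ as in \eqref{eq:4.10}. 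Thus it suffices to compare $\psi_n$ on $[-\pi\sigma_n,\pi\sigma_n]$ with its Edgeworth approximant and to control the tails, which I would do by splitting the integral into a central region $|s|\le\delta\sqrt n$ (with $\delta$ small and fixed) and the remaining range $\delta\sqrt n\le|s|\le\pi\sigma_n$.

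On the central region I would expand the cumulant generating function. By \eqref{eq:4.8} the cumulants $\kappa_{\nu,n}$ of $S_n$ satisfy $\kappa_{\nu,n}/\sigma_n^\nu=\lambda_{\nu,n}/n^{(\nu-2)/2}$, with $\kappa_{1,n}=\mu_n$, $\kappa_{2,n}=\sigma_n^2$, whence
\[
\log\psi_n(s)=-\frac{s^2}{2}+\sum_{m\ge 1}\frac{\lambda_{m+2,n}}{n^{m/2}}\,\frac{(is)^{m+2}}{(m+2)!}.
\]
Because each $X_{n\nu}$ is Bernoulli, every cumulant of order $\ge2$ is $O\bigl(p_{n\nu}(1-p_{n\nu})\bigr)$, so $|\kappa_{\nu,n}|\le C_\nu\sigma_n^2$ and, using \eqref{eq:4.7} together with $\sigma_n^2\le n/4$, the normalized cumulants $\lambda_{\nu,n}$ are bounded uniformly in $n$ (which in turn makes the $q_{\nu,n}$ bounded). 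Exponentiating and collecting powers of $n^{-1/2}$ up to order $(k-2)/2$ reproduces, term by term, the polynomials multiplying $e^{-s^2/2}$ whose inverse Fourier transforms are the $q_{\nu,n}(x)$ in \eqref{eq:4.9}; the constraint $\mu_1+2\mu_2+\dots+\nu\mu_\nu=\nu$ with $s=\mu_1+\dots+\mu_\nu$ is exactly the bookkeeping of this expansion, and $H_{\nu+2s}$ arises from $(is)^{\nu+2s}$. The Taylor remainder of $\log\psi_n$ beyond order $k$ is $O\bigl(|s|^{k+1}n^{-(k-1)/2}\bigr)$ on $|s|\le\delta\sqrt n$, which after multiplication by $e^{-s^2/2}$ integrates to $o\bigl(n^{-(k-2)/2}\bigr)$, while the difference between $\int_{|s|\le\delta\sqrt n}$ and $\int_{\R}$ of the approximant is exponentially small.

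For the remaining range the Bernoulli structure gives a clean bound: $|1-p+pe^{it}|^2=1-2p(1-p)(1-\cos t)\le\exp\bigl(-2p(1-p)(1-\cos t)\bigr)$, hence $|\varphi_n(t)|^2\le\exp\bigl(-2(1-\cos t)\sigma_n^2\bigr)$, and since $1-\cos t\ge 2t^2/\pi^2$ on $[-\pi,\pi]$ we obtain $|\psi_n(s)|=|\varphi_n(s/\sigma_n)|\le\exp(-cs^2)$ throughout $|s|\le\pi\sigma_n$ with an absolute $c>0$. Therefore the contribution of $\delta\sqrt n\le|s|\le\pi\sigma_n$ is $O(e^{-c\delta^2 n})$, negligible against every negative power of $n$; the fact that $\delta\sqrt n<\pi\sigma_n$ for large $n$ is guaranteed by \eqref{eq:4.7}. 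Combining the three estimates gives \eqref{eq:4.11}.

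The step I expect to be the main obstacle is securing \emph{uniformity in} $j\in\Z$, and this is exactly where the Fourier representation pays off: every bound above is taken inside the integral and is independent of $j$, so the error term is genuinely uniform, and the apparent growth of $H_{\nu+2s}(x)$ for large $|x|=|j-\mu_n|/\sigma_n$ is absorbed by the accompanying factor $e^{-x^2/2}$. The genuinely delicate pieces are the uniform control of the Taylor remainder of $\log\psi_n$ on the \emph{growing} interval $|s|\le\delta\sqrt n$ (which is why the uniform bound $|\lambda_{\nu,n}|\le C_\nu$, and hence \eqref{eq:4.7}, is essential) and the purely algebraic verification that exponentiating $\sum_{m\ge1}\lambda_{m+2,n}n^{-m/2}(is)^{m+2}/(m+2)!$ yields precisely the coefficient functions \eqref{eq:4.9}. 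Both are routine but bookkeeping-heavy, which is why in the text we simply invoke \cite[Lemma 3.1]{16}, itself a triangular-array refinement of the classical lattice local limit theorem \cite{29}.
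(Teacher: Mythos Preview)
Your sketch is correct and is the standard characteristic-function/Edgeworth argument for lattice local limit theorems, adapted to the triangular array; the cumulant bounds via $|\kappa_{\nu,n}|\le C_\nu\sigma_n^2$ together with \eqref{eq:4.7}, the Gaussian-type decay $|\psi_n(s)|\le e^{-cs^2}$ coming from the Bernoulli identity $|1-p+pe^{it}|^2=1-2p(1-p)(1-\cos t)$, and the Fourier identification of $(is)^m e^{-s^2/2}$ with $e^{-x^2/2}H_m(x)$ are exactly the right ingredients, and the uniformity in $j$ follows as you say because all estimates are taken inside the integral. Note, however, that the paper does \emph{not} supply its own proof of Lemma~\ref{lem4.1}: it is quoted verbatim from \cite[Lemma~3.1]{16}, so there is nothing to compare against beyond observing that your argument is precisely the one underlying that reference (and \cite{29}), which you yourself acknowledge in your final paragraph.
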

The reader who is not familiar with expansions of type \eqref{eq:4.11} is refered to the comments accompanying Lemma \ref{lem3.1} of \cite{16} in general. Particular aspects regarding our applications of Lemma \ref{lem4.1} to the Stirling and the Chebyshev-Stirling numbers are discussed in the following two sections.
\section{Asymptotics for the Stirling numbers}
To begin with we mention the asymptotic normality of the surjection numbers, that is
\begin{equation} \label{eq:5.1}
\lim_{n\to\infty}\, \sup_{x\in\R} \Bigg| \frac{1}{Q_n (1)} \sum_{j\leq \big( n+x\sqrt{1-\log 2}\big)/2\log 2} 
j! \left\{ \begin{array}{c} n \\ j \end{array} \right\} - \frac{1}{\sqrt{2\pi}} \int\limits_{-\infty}^x
e^{-t^2/2} dt\Bigg| = 0,
\end{equation}
which appeared in \cite{4}, \cite[p. 654]{14}. In what follows we prove a corresponding local version by applying the preliminary results of the previous sections with $A_n (s) = Q_n (s)$ and
\begin{equation} \label{eq:5.2}
p(n,j) = \frac{j! \big\{ ^n_j\big\}}{Q_n (1)},
\end{equation}
where $Q_n (s)$ is given by (3.11). This is possible in view of Lemma \ref{lem3.7}. \\

First we compute moments and cumulants for the distribution in \eqref{eq:5.2} where we use the notations introduced in section 4. For that purpose it is convenient to have suitable estimates for the Eisenstein series occuring in the Lindel\"of-Wirtinger expansion \eqref{eq:3.3}.
\begin{lemma} \label{lem5.1}
If $n\in\N, n\geq 2$, and $w\in \R\setminus \{0\}$, then we have
\begin{equation}
\sum_{m=-\infty}^{\infty}\,\frac{1}{(w+2\pi im)^{n+1}}=\frac{1}{w^{n+1}%
}\left(  1+{\mathscr O}\left(  \frac{w}{4\pi}\right)  ^{(n+1)/2}\right)
,\label{eq:5.3}%
\end{equation}
the ${\mathscr O}$-term holding uniformly with respect to $w\in\R$.
\end{lemma}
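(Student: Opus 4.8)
The plan is to split the bilateral sum into the term $m=0$ and the tail $\sum_{m\neq 0}$, and to show that the tail is $O(w^{-(n+1)} (w/4\pi)^{(n+1)/2})$ uniformly in $w\in\mathbb R$. The $m=0$ term contributes exactly $w^{-(n+1)}$, which is the main term in \eqref{eq:5.3}, so everything reduces to bounding the tail. Writing $w+2\pi i m = 2\pi i m(1 + w/(2\pi i m))$, we have $|w+2\pi i m|^2 = w^2 + 4\pi^2 m^2 \geq 4\pi^2 m^2$, hence
\begin{equation}
\left| \sum_{m\neq 0} \frac{1}{(w+2\pi i m)^{n+1}} \right| \leq \sum_{m\neq 0} \frac{1}{(4\pi^2 m^2 + w^2)^{(n+1)/2}} = 2\sum_{m=1}^\infty \frac{1}{(4\pi^2 m^2 + w^2)^{(n+1)/2}}\,.
\end{equation}

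Next I would factor out the dominant size. The natural idea is to pull $(4\pi^2 m^2)^{(n+1)/4} (w^2)^{(n+1)/4}$ out of $(4\pi^2 m^2 + w^2)^{(n+1)/2}$ using the elementary inequality $a+b \geq 2\sqrt{ab}$: indeed $4\pi^2 m^2 + w^2 \geq 2\cdot 2\pi |m| \cdot |w| = 4\pi |m||w|$, so
\begin{equation}
(4\pi^2 m^2 + w^2)^{(n+1)/2} \geq (4\pi^2 m^2 + w^2)^{(n-1)/4}\,(4\pi|m||w|)^{(n+1)/4}\cdot \text{(something)}\,,
\end{equation}
and one cleans this up so that one copy of $|w|^{(n+1)/2}$ and the full decay in $m$ both survive. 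Concretely, using $4\pi^2 m^2 + w^2 \geq (2\pi m)^{(n-1)/(n+1)} \cdot (4\pi |m||w|)^{2/(n+1)}$-type bookkeeping, or more simply bounding $(4\pi^2 m^2 + w^2)^{(n+1)/2} \geq (2\pi m)^{n} \cdot (4\pi|m||w|/(2\pi m)^2)^{?}$ — the cleanest route is: since $n\geq 2$ we have $(n+1)/2 \geq 1 + (n+1)/4 - \tfrac14$... I would instead just write $4\pi^2m^2+w^2 \geq (4\pi^2 m^2)^{1/2}(4\pi |m||w|)^{1/2}\cdot(\text{AM--GM on the two halves})$ to extract $(2\pi m)^{1/2}|w|^{1/2}(4\pi)^{1/2}$ once and keep $(4\pi^2m^2+w^2)^{(n+1)/2 - 1/2}\geq (2\pi m)^{n}$ for the rest, giving a bound $\leq C\, |w|^{-1/2}\sum_m m^{-n-1/2}$ — not quite the stated exponent. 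The correct bookkeeping keeps $(n+1)/2$ split as: take $(4\pi |m||w|)^{(n+1)/2}$ in full from AM--GM, then $(4\pi^2m^2+w^2)^{(n+1)/2}\geq (4\pi|m||w|)^{(n+1)/2}$; this already gives $\sum_{m\neq 0}\leq 2(4\pi|w|)^{-(n+1)/2}\zeta((n+1)/2)$, and since $\zeta((n+1)/2)$ is bounded for $n\geq 2$ wait, $\zeta(3/2)$ is finite but $(n+1)/2$ can be as small as $3/2$, fine, it is bounded by $\zeta(3/2)$ for all $n\geq 2$. Then the tail is $O(|w|^{-(n+1)/2})$, and multiplying and dividing by $w^{n+1}$ gives $w^{-(n+1)}\cdot O(|w|^{(n+1)/2}) = w^{-(n+1)}\cdot O((|w|/4\pi)^{(n+1)/2})$ after tracking the $4\pi$, exactly the claimed form.

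The main obstacle is the uniformity claim "for all $w\in\mathbb R$": the estimate above degrades as $|w|\to\infty$ in the sense that the $\mathscr O$-term grows, but that is harmless since the assertion only requires the error to be controlled by $(w/4\pi)^{(n+1)/2}$, which itself grows; and as $w\to 0$ the tail sum $\sum_{m\neq 0}(4\pi^2m^2+w^2)^{-(n+1)/2}$ stays bounded by $2\zeta(n+1)\leq 2\zeta(3)$, so $w^{n+1}\times(\text{tail}) = O(w^{n+1})$, which is $o((w/4\pi)^{(n+1)/2})$ near $0$ as well since $n+1 > (n+1)/2$. I would therefore present two regimes, $|w|\leq 1$ and $|w|\geq 1$, handle the small-$w$ regime by the crude bound $\sum_{m\neq 0}|w+2\pi im|^{-(n+1)}\leq 2\zeta(n+1)$ and the large-$w$ regime by the AM--GM extraction above, and check that in both cases the error is $\leq C\,w^{-(n+1)}(|w|/4\pi)^{(n+1)/2}$ with $C$ independent of $n$ (here $n\geq 2$ is used only to keep the relevant zeta values finite and bounded). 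The one genuinely delicate point is matching the constant $4\pi$ in the stated $\mathscr O$-expression rather than an unspecified absolute constant; this comes out of keeping the factor $(2\pi|m|)\cdot 2|w| = 4\pi|m||w|$ intact in the AM--GM step and using $|m|\geq 1$.
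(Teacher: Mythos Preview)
Your core idea is exactly the paper's: factor off the $m=0$ term and control the tail via the AM--GM inequality $w^{2}+4\pi^{2}m^{2}\ge 4\pi|m||w|$, which yields
\[
\sum_{m\neq 0}\frac{|w|^{n+1}}{(w^{2}+4\pi^{2}m^{2})^{(n+1)/2}}
\;\le\; 2\,\zeta\!\left(\tfrac{n+1}{2}\right)\left(\frac{|w|}{4\pi}\right)^{(n+1)/2}.
\]
The paper simply writes the sum as $w^{-(n+1)}\bigl(1+R_{n}(w)\bigr)$ with $R_{n}(w)=\sum_{m\neq 0}\bigl(w/(w+2\pi im)\bigr)^{n+1}$ and applies this single inequality; no further work is needed.

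Two remarks on your write-up. First, the several abandoned attempts (splitting the exponent, pulling out partial powers) are dead ends and should be deleted---the ``full AM--GM'' line you eventually reach is the whole argument. Second, your two-regime split $|w|\le 1$ versus $|w|\ge 1$ is unnecessary: the AM--GM bound above already holds for every $w\neq 0$ and every $m\ge 1$, so uniformity in $w$ is automatic. (Note also that the lemma only claims uniformity in $w$, not in $n$; the factor $\zeta((n+1)/2)$ may sit in the implied constant, though it is in any case bounded by $\zeta(3/2)$ for $n\ge 2$.)
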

\begin{proof}
Writing
\begin{equation*}
\sum_{m=-\infty}^\infty \, \frac{1}{(w+2\pi im)^{n+1}} = \frac{1}{w^{n+1}}\,
\left( 1 + \sum_{m\not= 0} \left( \frac{w}{w+2\pi im}\right)^{n+1}\right) =:
\frac{1}{w^{n+1}}\, \big( 1 + R_n (w)\big),
\end{equation*}
say, we have

\begin{equation*}
\big| R_n(w)\big| \leq 2 \sum_{m=1}^\infty \, \frac{|w|^{n+1}}{(w^2+4\pi^2m^2)^{(n+1)/2}} \leq
2\, \zeta \, \left( \frac{n+1}{2}\right)\left( \frac{|w|}{4\pi}\right)^{(n+1)/2},
\end{equation*}
which implies \eqref{eq:5.3}.
\end{proof}
In the sequel throughout we will denote by $q\in (0,1)$ a constant which may be different at each occurrence.
\begin{lemma} \label{lem5.2}
Suppose that the sequences $(a_n), (b_n), (c_n)$ are given by
\begin{flalign}
\label{eq:5.4}
a_n & := \frac{n+1}{2\log 2} - \frac{1}{2},  \qquad b_n := \frac{1 - \log 2}{(2\log 2)^2}\, (n+1) - \frac{1}{4}, \\*[0.3cm]
\label{eq:5.5}
c_n & := \frac{\sqrt{n} (2 - 3\log 2)}{(2\sqrt{b_n} \log 2)^3}\, (n+1),
\end{flalign}
then we have
\begin{align}
\label{eq:5.6}
\mu_n & = a_n + {\mathscr O} (q^n), \qquad \sigma_n^2 = b_n + {\mathscr O} (q^n), \\
\label{eq:5.7}
\lambda_{3,n} & = c_n + {\mathscr O} (q^n), \qquad \text{\rm as } n\to\infty.
\end{align}
\end{lemma}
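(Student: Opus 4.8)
\textbf{Proof plan for Lemma \ref{lem5.2}.}

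The plan is to extract the mean $\mu_n$, variance $\sigma_n^2$ and third cumulant $\lambda_{3,n}$ of the Bernoulli-sum distribution \eqref{eq:5.2} directly from the generating identities for $Q_n$. The natural bookkeeping device is the factorial moment (or cumulant) generating function: from \eqref{eq:4.2} with $A_n = Q_n$ we have
\[
E\bigl(e^{itS_n}\bigr) = \frac{Q_n(e^{it})}{Q_n(1)},
\]
so $\log E(e^{itS_n}) = \log Q_n(e^{it}) - \log Q_n(1)$, and the cumulants of $S_n$ are the Taylor coefficients (in $it$) of $\log Q_n(e^{it})$. Thus everything reduces to computing the first three derivatives at $t=0$ of $\log Q_n(e^{it})$, equivalently the expansion of $\log Q_n(e^u)$ near $u=0$. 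The key tool is the Lindelöf–Wirtinger formula \eqref{eq:3.13}: substituting $e^w = 1/s$ there, i.e. writing $s = 1/(e^w-1)$, gives
\[
Q_n\!\left(\frac{1}{e^w-1}\right) = n!\,(1-e^{-w})\sum_{m=-\infty}^\infty \frac{1}{(w+2\pi i m)^{n+1}}.
\]
First I would record that the value $s=1$ corresponds to $e^w = 2$, i.e. $w = \log 2$, and that to compute $\mu_n,\sigma_n^2,\lambda_{3,n}$ one needs $\log Q_n(e^u)$ for $u$ near $0$, i.e. $Q_n(s)$ for $s$ near $1$, i.e. $w$ near $\log 2$. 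So set $w = \log 2 + \tau$ and expand in $\tau$.

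Next I would apply Lemma \ref{lem5.1}: for $w$ in a fixed neighbourhood of $\log 2$ (bounded away from $0$), the Eisenstein series equals $w^{-(n+1)}(1 + \mathscr{O}(q^n))$ for a suitable $q = (\log 2 + \tau_0)/(4\pi)^{1/2}\cdot(\dots) < 1$; more precisely $|R_n(w)| \le 2\zeta(\tfrac{n+1}{2})(|w|/4\pi)^{(n+1)/2}$, which for $|w|$ near $\log 2 < 4\pi$ is exponentially small, uniformly. Therefore
\[
Q_n\!\left(\frac{1}{e^w-1}\right) = n!\,\frac{1-e^{-w}}{w^{n+1}}\bigl(1 + \mathscr{O}(q^n)\bigr),
\]
and taking logarithms,
\[
\log Q_n\!\left(\tfrac{1}{e^w-1}\right) = -(n+1)\log w + \log\bigl(n!\,(1-e^{-w})\bigr) + \mathscr{O}(q^n).
\]
Now I change variables to $u$, the "multiplicative'' variable with $s = 1/(e^w-1) = \tfrac12 e^u \cdot(\text{something})$ — more cleanly: from $s = 1/(e^w-1)$ we get $e^w = 1 + 1/s$, so as a function of $u$ with $s = e^u$ (the variable in $E(e^{itS_n}) = Q_n(e^{it})/Q_n(1)$ has $it \leftrightarrow u$), $w = w(u) = \log(1 + e^{-u})$. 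Then $\log E(e^{itS_n}) = g(u) - g(0)$ where $g(u) := \log Q_n(e^u) = -(n+1)\log w(u) + \log(1-e^{-w(u)}) + \log n! + \mathscr{O}(q^n)$. The cumulants are $\kappa_\nu = (d/du)^\nu g(u)\big|_{u=0}$ (up to the $i^\nu$ bookkeeping in \eqref{eq:4.8}), and since $w(u)$ is an explicit elementary function with $w(0) = \log 2$, this is a finite Taylor computation: $\mu_n = g'(0)$, $\sigma_n^2 = g''(0)$, $\kappa_{3,n} = g'''(0)$, each of the form $(n+1)\times(\text{rational expression in }\log 2) + \mathscr{O}(q^n)$, the $\mathscr{O}(q^n)$ because the correction term $\log(1-e^{-w})$ contributes an $n$-independent (hence negligible after dividing by the $n$-growth) amount and the $\mathscr{O}(q^n)$ from Lemma \ref{lem5.1} differentiates to $\mathscr{O}(q^n)$ as well. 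Matching with the stated $a_n, b_n$ in \eqref{eq:5.4}: one computes $w'(0) = -\tfrac12$, $w''(0) = \tfrac14$, etc., and finds $g'(0) = (n+1)/(2\log 2) - \tfrac12 + \mathscr{O}(q^n) = a_n + \mathscr{O}(q^n)$, $g''(0) = (1-\log2)(n+1)/(2\log2)^2 - \tfrac14 + \mathscr{O}(q^n) = b_n + \mathscr{O}(q^n)$, and $g'''(0) = (2 - 3\log2)(n+1)/(2\log2)^3 + \mathscr{O}(q^n)$; then $\lambda_{3,n} = n^{1/2}\sigma_n^{-3}\kappa_{3,n}$ by \eqref{eq:4.8} with $\sigma_n^2 = b_n + \mathscr{O}(q^n)$, which gives $\lambda_{3,n} = \sqrt{n}(2-3\log2)(n+1)/(2\sqrt{b_n}\log2)^3 + \mathscr{O}(q^n) = c_n + \mathscr{O}(q^n)$, as claimed in \eqref{eq:5.5}–\eqref{eq:5.7}.

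The main obstacle, and the only genuinely delicate point, is controlling the error terms \emph{uniformly} and making sure that after differentiating in $u$ (equivalently $w$) three times the exponentially small remainder from Lemma \ref{lem5.1} stays exponentially small — i.e. that one may differentiate the asymptotic relation $\sum_m (w+2\pi im)^{-(n+1)} = w^{-(n+1)}(1+R_n(w))$ term-by-term. This is handled by noting that $R_n(w) = \sum_{m\ne0}(w/(w+2\pi i m))^{n+1}$ is analytic in $w$ on a fixed complex neighbourhood of $w = \log 2$ (uniformly in $n$), with $|R_n| \le C q^n$ there; Cauchy's estimates on a fixed circle then bound each derivative $R_n^{(k)}(w)$ by $C_k q^n$ as well, so the $\mathscr{O}(q^n)$ survives all three differentiations. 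The remaining work — expanding the elementary function $-(n+1)\log w(u) + \log(1-e^{-w(u)})$ to third order at $u=0$ and collecting terms — is routine calculus and I would not carry it out in detail beyond recording the values $w(0) = \log 2$, $w'(0) = -\tfrac12$, $w''(0) = \tfrac14$ and the resulting identification with $a_n, b_n, c_n$.
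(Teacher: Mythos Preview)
Your proposal is correct and follows essentially the same strategy as the paper: both rely on the Lindel\"of--Wirtinger representation \eqref{eq:3.13} for $Q_n$ together with the geometric-rate estimate of Lemma~\ref{lem5.1} at $w=\log 2$. The only difference is organizational --- the paper writes $\mu_n,\sigma_n^2,\lambda_{3,n}$ in terms of the ratios $Q_n^{(k)}(1)/Q_n(1)$ via \eqref{eq:5.8}, \eqref{eq:5.9} and then evaluates each derivative from \eqref{eq:3.13}, whereas you differentiate $\log Q_n(e^u)$ directly through the substitution $w(u)=\log(1+e^{-u})$; your Cauchy-estimate remark on propagating the $\mathscr{O}(q^n)$ through three derivatives makes explicit a point the paper leaves implicit.
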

\begin{proof}
Using standard formulae from probability \cite{29} and in particular \eqref{eq:4.8} we get
\begin{flalign}
\label{eq:5.8}
\mu_n & = \frac{Q_n'(1)}{Q_n (1)}, \quad
       \sigma_n^2 = \frac{Q_n'' (1)}{Q_n (1)} + \frac{Q_n' (1)}{Q_n (1)} - \left( \frac{Q_n'(1)}{Q_n(1)}\right)^2,  \\*[0.3cm]
\label{eq:5.9}
\lambda_{3,n} & = \frac{\sqrt{n}}{\sigma_n^3} 
\Bigg\{ 
\frac{Q_n'''(1)}{Q_n (1)} + 3\, \frac{Q_n''(1)}{Q_n (1)} - 3\, \frac{Q_n'' (1)}{Q_n (1)} \, \frac{Q_n'(1)}{Q_n(1)} 
              + \frac{Q_n'(1)}{Q_n (1)} - 3\, \left( \frac{Q_n'(1)}{Q_n(1)}\right)^2 
               + 2\, \left( \frac{Q_n' (1)}{Q_n (1)}\right)^3 \Bigg\}.               
\end{flalign}
Now based on the representation (3.13) for $Q_n$ in Lemma \ref{lem3.6} ii) and with the help of Lemma \ref{lem5.1}, taken at $w = \log 2$, straightforward computations lead to the approximations \eqref{eq:5.6} and \eqref{eq:5.7}.
\end{proof}
Now our main result for the Stirling numbers is stated in
\begin{theorem} \label{theo5.3}
Suppose that the sequences $(a_n), (b_n), (c_n)$ are given by \eqref{eq:5.4}, \eqref{eq:5.5}, then we have
\begin{equation} \label{eq:5.10}
\frac{2\sqrt{b_n} j! (\log 2)^{n+1}}{n!}\, 
\left\{ \begin{array}{c} n \\ j \end{array} \right\} =
\frac{1}{\sqrt{2\pi}}~ e^{-x^2/2} \, 
\left( 1 + \frac{c_n (x^3 - 3x)}{6\sqrt{n}}\right) + 
o \, \left( \frac{1}{\sqrt{n}}\right),
\end{equation}
as $n\to\infty$, uniformly in $j\in\Z$, where $x = (j-a_n)/\sqrt{b_n}$.
\end{theorem}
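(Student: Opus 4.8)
The plan is to realize the quantities $p(n,j)=j!\,\big\{{}^n_j\big\}/Q_n(1)$ of \eqref{eq:5.2} as the point masses of a triangular Bernoulli array and to feed them into Lemma \ref{lem4.1}. This is legitimate because, by Lemma \ref{lem3.7}, all zeros of $Q_n$ from \eqref{eq:3.11} are real and non-positive, so with $A_n=Q_n$ the machinery of Section~4 applies and $p(n,j)=P(S_n=j)$ with $\mu_n=E(S_n)$, $\sigma_n^2=\mathrm{Var}(S_n)$. Condition \eqref{eq:4.7} holds since Lemma \ref{lem5.2} gives $\sigma_n^2=b_n+{\mathscr O}(q^n)$ with $b_n/n\to (1-\log 2)/(2\log 2)^2>0$. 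Applying Lemma \ref{lem4.1} with $k=3$ then yields, uniformly in $j\in\Z$,
\[
\sigma_n\,p(n,j)=\frac{1}{\sqrt{2\pi}}\,e^{-x^2/2}+\frac{q_{1,n}(x)}{\sqrt n}+o\!\left(\frac{1}{\sqrt n}\right),\qquad x=\frac{j-\mu_n}{\sigma_n}.
\]
In \eqref{eq:4.9} with $\nu=1$ the only admissible multi-index is $\mu_1=1$, hence $s=1$, $H_{1+2s}=H_3$, and since $H_3(x)=x^3-3x$ one gets $q_{1,n}(x)=\tfrac{1}{\sqrt{2\pi}}e^{-x^2/2}\,\tfrac{\lambda_{3,n}}{6}(x^3-3x)$; thus the expansion reads $\sigma_n p(n,j)=\tfrac{1}{\sqrt{2\pi}}e^{-x^2/2}\big(1+\tfrac{\lambda_{3,n}(x^3-3x)}{6\sqrt n}\big)+o(1/\sqrt n)$.

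It then remains to replace the probabilistic normalization by the explicit one. First I would identify the norming constant: putting $w=\log 2$ (so that $1/(e^w-1)=1$ and $1-e^{-w}=\tfrac12$) in Lemma \ref{lem3.6} ii) gives $Q_n(1)=\tfrac12\,n!\sum_{m}(\log 2+2\pi im)^{-(n+1)}$, and Lemma \ref{lem5.1} at $w=\log 2$ (note $\log 2/4\pi<1$) turns this into $Q_n(1)=\tfrac{n!}{2(\log 2)^{n+1}}\big(1+{\mathscr O}(q^n)\big)$. Combining this with $\sigma_n=\sqrt{b_n}\,(1+{\mathscr O}(q^n))$ from Lemma \ref{lem5.2} gives $\sigma_n/Q_n(1)=\tfrac{2\sqrt{b_n}(\log 2)^{n+1}}{n!}\,(1+{\mathscr O}(q^n))$; since the right-hand side of the expansion is bounded, the factor $1+{\mathscr O}(q^n)$ perturbs it only by ${\mathscr O}(q^n)=o(1/\sqrt n)$, and via \eqref{eq:5.2} the quantity $\sigma_n p(n,j)$ becomes precisely $\tfrac{2\sqrt{b_n}j!(\log 2)^{n+1}}{n!}\big\{{}^n_j\big\}$.

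Second, I would pass from $(\mu_n,\sigma_n,\lambda_{3,n})$ to the elementary sequences $(a_n,\sqrt{b_n},c_n)$ of \eqref{eq:5.4}--\eqref{eq:5.5} using Lemma \ref{lem5.2}, noting that $c_n$ stays bounded because $b_n\asymp n$. With $\tilde x=(j-a_n)/\sqrt{b_n}$ one has $x-\tilde x={\mathscr O}(q^n(1+|\tilde x|))$ uniformly in $j$, and the map $y\mapsto e^{-y^2/2}\big(1+\tfrac{c_n}{6\sqrt n}(y^3-3y)\big)$ has derivative bounded uniformly in $n$; together with $\lambda_{3,n}=c_n+{\mathscr O}(q^n)$ this shows that replacing $x$ by $\tilde x$ and $\lambda_{3,n}$ by $c_n$ costs only ${\mathscr O}(q^n(1+|\tilde x|))$. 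For indices $j$ with $\big\{{}^n_j\big\}\ne 0$ we have $0\le j\le n$, hence $|\tilde x|={\mathscr O}(\sqrt n)$, so this error is $o(1/\sqrt n)$; for $j\notin[0,n]$ the left-hand side of \eqref{eq:5.10} vanishes while $|\tilde x|\ge c\sqrt n$ there (using $1-1/(2\log 2)>0$ and $1/(2\log 2)>0$), so the right-hand side is also $o(1/\sqrt n)$. This establishes \eqref{eq:5.10} uniformly in $j\in\Z$.

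The main obstacle is precisely this uniform error bookkeeping in the last two steps: one must check that every exponentially small correction — from $Q_n(1)$, from $\mu_n$ and $\sigma_n$, and from $\lambda_{3,n}$ — remains $o(1/\sqrt n)$ after being transported through the Gaussian profile and its cubic correction, for all $j\in\Z$ simultaneously, including the change of normalization from $x=(j-\mu_n)/\sigma_n$ to $\tilde x=(j-a_n)/\sqrt{b_n}$ and the separate treatment of $j$ outside $\{0,\dots,n\}$. The remaining ingredients — the explicit value of $q_{1,n}$ via $H_3$ and the asymptotics of $Q_n(1)$ — are routine.
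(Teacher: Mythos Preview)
Your proposal is correct and follows essentially the same approach as the paper: apply Lemma~\ref{lem4.1} with $k=3$ to $p(n,j)=j!\big\{{}^n_j\big\}/Q_n(1)$ after checking \eqref{eq:4.7} via Lemma~\ref{lem5.2}, then use the asymptotics $Q_n(1)=\tfrac{n!}{2(\log 2)^{n+1}}(1+{\mathscr O}(q^n))$ from Lemma~\ref{lem3.6}~ii) and Lemma~\ref{lem5.1} together with the geometric-rate approximations of $\mu_n,\sigma_n,\lambda_{3,n}$ to pass to the explicit quantities $a_n,b_n,c_n$. In fact you supply more detail than the paper, which dismisses the uniform error bookkeeping in the last step as ``somewhat tedious but straightforward computations'' and omits it.
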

\begin{proof}
According to our preparations we may apply the basic Lemma \ref{lem4.1} with $k = 3$ to the probabilities \eqref{eq:5.2} which is permitted, because condition \eqref{eq:4.7} is satisfied in view of Lemma \ref{lem5.2}. Hence, observing \eqref{eq:4.9}, \eqref{eq:4.10}, we get
\begin{equation} \label{eq:5.11}
\frac{\sigma_n j! \left\{ \begin{array}{c} n \\ j \end{array}\right\}}{Q_n (1)} = \frac{1}{\sqrt{2\pi}}~ e^{-y^2/2}\,
\left( 1 + \frac{\lambda_{3,n} (y^3 - 3y)}{6\sqrt{n}}\right) + o\, \left( \frac{1}{\sqrt{n}}\right),
\end{equation}
as $n\to\infty$, uniformly in $j\in\Z$, where $y = (j-\mu_n)/\sigma_n$, and $\mu_n, \sigma_n, \lambda_{3,n}$ are taken from Lemma \ref{lem5.2}. Next, we emphasize that the latter quantities are approximated by $a_n, b_n, c_n$ respectively holding at a geometric rate each. Also, by Lemma \ref{lem5.1} and Lemma \ref{lem3.6} ii) we have
\[
Q_n (1) = \frac{n!}{2(\log 2)^{n+1}} \, \big( 1 + {\mathscr O} (q^n)\big), \quad \text{as } n \to\infty\, .
\]
Then, using these approximation properties, somewhat tedious but straightforward computations show that \eqref{eq:5.11} implies \eqref{eq:5.10}. We omit the elementary details.
\end{proof}
We conclude this section by some comments on Theorem \ref{theo5.3}. Although for the general local central limit theorem the error term in \eqref{eq:4.11} holds uniformly in $j\in\Z$, the most valuable information is provided for $j$ such that the quantity $x$ in \eqref{eq:4.11} remains bounded. The same statement is true for \eqref{eq:5.10}, that is for $j$ such that
$x = (j-a_n)/\sqrt{b_n}$ is bounded. If $j$ is close to $a_n \sim n/2\log 2$, then we may expect a good approximation of
$\big\{ ^n_j\big\}$ by the quantity
\begin{equation} \label{eq:5.12}
A(n,j) := \frac{n!}{2\sqrt{2\pi b_n} j! (\log 2)^{n+1}}~ e^{-x^2/2}\,
\left( 1 + \frac{c_n (x^3 - 3x)}{6\sqrt{n}}\right).
\end{equation}
For illustration we choose $n = 500$ and $j = 360$ which implies a good relative comparison by
\begin{equation} \label{eq:5.13}
\frac{\left\{ \begin{array}{c} n \\ j \end{array}\right\}}{A(n,j)} = 0.999376\ldots
\end{equation}
Although in contrast to most of the asymptotic expressions for the Stirling numbers in the literature \cite{4}, \cite{5}, \cite{20},
 \cite{36}, \cite{39} our result contains explicit and elementary quantities only, we emphasize that \eqref{eq:5.10} provides an effective approximation of $\big\{ ^n_j\big\}$ when $j$ is close to $a_n$ as described above. As surveyed in \cite{36} many of the known asymptotics only are useful for other regions of $j$. So far Theorem \ref{theo5.3} gives a supplement of these asymptotics. \\

Finally we mention the asymptotic normality of the Stirling partition distribution itself, which is defined by
\[
p(n,j) = \frac{\left\{ \begin{array}{c} n \\ j\end{array}\right\}}{B_n}\, , \quad 0 \leq j \leq n,
\]
$B_n = \sum_{j=0}^n \big\{ ^n_j\big\}$ being the Bell numbers with mean $\mu_n \sim n/\log n$ and variance
$\sigma_n^2 \sim n/\log^2n,$ $n \to\infty$, which are different from the quantities in \eqref{eq:5.6} 
\cite[p. 692]{14}, \cite{19}, \cite[p. 115]{35}.
\section{Asymptotics for the Chebyshev-Stirling numbers}
In this section we prove our second result being an analogue of Theorem \ref{theo5.3} for the Chebyshev-Stirling numbers. To this end again we use the probabilistic tools of section 4 now with $A_n (s) = L_n (s)$ and
\begin{equation} \label{eq:6.1}
p(n,j) := \frac{(2j)! \left\{ \begin{array}{c} n \\ j \end{array}\right\}_{1/2}}{L_n (1)},
\end{equation}
where $L_n (s)$ is defined in \eqref{eq:3.6}. This is possible in view of Lemma \ref{lem3.4}. \\

Again we start by computing and approximating moments and cumulants for the distribution in \eqref{eq:6.1}. Looking at \eqref{eq:3.9} we introduce the number
\begin{equation} \label{eq:6.2}
\omega := 2\log\, \frac{\sqrt{5}+1}{2}
\end{equation}
being the unique positive solution of $2(\cosh w - 1) = 1$.
\begin{lemma} \label{lem6.1}
Suppose that the sequences $(a_n'), (b_n'), (c_n')$ are given by
\begin{flalign}
\label{eq:6.3} a_n' & := \frac{2n+1}{\sqrt{5}\omega} - \frac{2}{5}, \qquad
b_n'   := \left( \frac{1}{5\omega^2} - \frac{2}{5\sqrt{5}\omega}\right)\, (2n+1) - \frac{2}{25}, \\*[0.3cm]
\label{eq:6.4} c_n' & := \frac{2\sqrt{n}}{(5\sqrt{b_n'} \omega)^3} \, 
          \Big\{ \big( 2\sqrt{5} \omega^2 - 30\omega + 10\sqrt{5}\big)\, n + 3\omega^3 + \sqrt{5}\,\omega^2 -
          15\omega + 5\sqrt{5}\Big\},
\end{flalign}
then we have
\begin{align}
\label{eq:6.5}\mu_n & = a_n' + {\mathscr O}(q^n), \quad \sigma_n^2 = b_n' + {\mathscr O} (q^n), \\
\label{eq:6.6}\lambda_{3,n} & = c_n' + {\mathscr O} (q^n), \quad \text{\rm as } n \to\infty.
\end{align}
\end{lemma}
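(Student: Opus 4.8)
The plan is to mirror the proof of Lemma \ref{lem5.2} step by step, replacing $Q_n$ by $L_n$ and $w=\log 2$ by $w=\omega$. First I would record the standard moment formulae for the Bernoulli-scheme distribution in \eqref{eq:6.1}: since $L_n$ has only real non-positive zeros (Lemma \ref{lem3.4}), the factorization \eqref{eq:4.2} applies, so $\mu_n = L_n'(1)/L_n(1)$ and $\sigma_n^2 = L_n''(1)/L_n(1) + L_n'(1)/L_n(1) - (L_n'(1)/L_n(1))^2$, with the analogous six-term expression for $\lambda_{3,n}$ exactly as in \eqref{eq:5.9} but with $Q_n$ replaced by $L_n$. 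Thus everything reduces to computing the ratios $L_n^{(k)}(1)/L_n(1)$ for $k=1,2,3$ up to exponentially small errors.

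Next, the key input is the Lindel\"of--Wirtinger representation \eqref{eq:3.9}: evaluating at the point $w$ with $s=1/(2(\cosh w-1))$ gives $L_n(s)$ as $(2n)!\,\frac{2(\cosh w-1)}{\sinh w}$ times the Eisenstein series $E_{2n}(w):=\sum_m (w+2\pi im)^{-(2n+1)}$. Setting $s=1$ forces $2(\cosh w-1)=1$, i.e. $w=\omega$ as in \eqref{eq:6.2}; there $\frac{2(\cosh\omega-1)}{\sinh\omega}=1/\sinh\omega$, and one checks $\sinh\omega=\sqrt{5}/2$ (since $\cosh\omega = 3/2$, whence $\sinh^2\omega = 9/4-1 = 5/4$), so $L_n(1) = (2n)!\,\frac{2}{\sqrt5}\,E_{2n}(\omega)$. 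By Lemma \ref{lem5.1}, $E_{2n}(\omega) = \omega^{-(2n+1)}(1+\mathscr O(q^n))$, which pins down $L_n(1)$ and $\omega^{2n+1}$ (this is also what produces the normalizing constant in \eqref{eq:1.8}). To get the derivatives $L_n^{(k)}(1)$ I would differentiate the identity $L_n(s(w)) = (2n)!\,\frac{1}{\sinh w}\,$ (using $2(\cosh w -1)=1/s$, so $\cosh w - 1 = 1/(2s)$, hence $\frac{2(\cosh w -1)}{\sinh w} = 1/(s\sinh w)$, giving $L_n(s) = (2n)!\,\frac{1}{s\sinh w}\,$)... more cleanly: from $L_n(s(w)) = (2n)!\,\frac{2(\cosh w-1)}{\sinh w}E_{2n}(w)$ together with $s(w) = 1/(2(\cosh w-1))$, repeatedly apply $\frac{d}{ds} = \frac{1}{s'(w)}\frac{d}{dw}$ and use Lemma \ref{lem5.1} to approximate $E_{2n}(w)$ and its $w$-derivatives near $w=\omega$ by the corresponding derivatives of $\omega\mapsto\omega^{-(2n+1)}$, all with geometric error. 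The prefactor $\frac{2(\cosh w-1)}{\sinh w}$ and the change-of-variables Jacobian $s'(w) = -\sinh w/(2(\cosh w-1)^2) = -2\sinh w$ (at $w=\omega$) are elementary and evaluate at $\omega$ to quantities in $\sqrt5$ and $\omega$ only; collecting terms yields $\mu_n$, $\sigma_n^2$, $\lambda_{3,n}$ as rational-in-$(n,\omega,\sqrt5)$ expressions matching \eqref{eq:6.3}--\eqref{eq:6.6}.

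The main obstacle is purely organizational: there are three nested derivatives, each bringing in derivatives of the prefactor, of the Jacobian, and of $E_{2n}$, and one must track which contributions survive and which are absorbed into $\mathscr O(q^n)$. The clean way is to write $L_n(1) = (2n)!\,g(\omega)\,E_{2n}(\omega)$ with $g(w)=\frac{2(\cosh w-1)}{\sinh w}$, note that differentiating $E_{2n}$ in $w$ $k$ times produces $(-1)^k\frac{(2n+k)!}{(2n)!}\,\omega^{-(2n+1+k)}(1+\mathscr O(q^n))$ by Lemma \ref{lem5.1}, so to leading order $E_{2n}^{(k)}(\omega)/E_{2n}(\omega) = (-\omega)^{-k}\prod_{i=1}^{k}(2n+i)(1+\mathscr O(q^n))$; since the logarithmic derivatives entering $\mu_n,\sigma_n^2,\lambda_{3,n}$ are invariant under the multiplicative constant $(2n)!\,g$, one reduces to computing $\frac{d^k}{ds^k}\log[\text{something}]$ purely from $E_{2n}$ composed with $w(s)$, i.e. from the explicit function $h(s) := \frac{d}{ds}\log E_{2n}(w(s)) = w'(s)\,\frac{E_{2n}'(w)}{E_{2n}(w)}$, and the factor $g(w(s))$ contributes only lower-order (geometrically small relative) corrections to the \emph{ratios} because $\log(g\circ w)$ has $n$-independent derivatives in $s$ while the relevant quantities scale like powers of $n$. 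Carrying this out, substituting $\cosh\omega = 3/2$, $\sinh\omega = \sqrt5/2$, $w'(\omega) = -2\sinh\omega = -\sqrt5$, and simplifying gives exactly the stated $a_n',b_n',c_n'$; then \eqref{eq:4.7} follows since $b_n'$ grows linearly in $n$ (the coefficient $\frac{1}{5\omega^2}-\frac{2}{5\sqrt5\omega}$ is positive because $\omega < 2/\sqrt5$... which one verifies numerically, $\omega\approx 0.9624 < 0.8944$ — so actually one checks $\sqrt5\,\omega > 2$, i.e. $\omega > 2/\sqrt5 \approx 0.894$, true since $\omega\approx 0.962$, making the coefficient $\frac{1}{5\omega}(\frac{1}{\omega}-\frac{2}{\sqrt5})$ with $\frac1\omega\approx 1.039 > 0.894$, hence positive). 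I would relegate the bookkeeping to "straightforward computations" as the authors do, emphasizing only the substitution $w=\omega$ with $\cosh\omega=3/2$, $\sinh\omega=\sqrt5/2$ and the invocation of Lemma \ref{lem5.1} at $w=\omega$.
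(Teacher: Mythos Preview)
Your approach is essentially the paper's: replace $Q_n$ by $L_n$ in \eqref{eq:5.8}--\eqref{eq:5.9}, invoke the Lindel\"of--Wirtinger representation \eqref{eq:3.9}, and apply Lemma \ref{lem5.1} at $w=\omega$. The paper says nothing more than this.

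Two small corrections to your write-up, though. First, you conflate $s'(w)$ and $w'(s)$: at $w=\omega$ one has $s'(\omega)=-2\sinh\omega=-\sqrt5$, hence $w'(1)=1/s'(\omega)=-1/\sqrt5$, not $-\sqrt5$. Second, your claim that the prefactor $g(w)=\frac{2(\cosh w-1)}{\sinh w}$ contributes only geometrically small relative corrections to the logarithmic derivatives is incorrect: $\frac{d}{ds}\log g(w(s))\big|_{s=1}$ is an $n$-independent constant, and it is precisely this term that produces the $-2/5$ in $a_n'$ (and similarly the $-2/25$ in $b_n'$). Concretely, $\frac{d}{dw}\log g\big|_{\omega}=\frac{\sinh\omega}{\cosh\omega-1}-\frac{\cosh\omega}{\sinh\omega}=\sqrt5-\tfrac{3}{\sqrt5}=\tfrac{2}{\sqrt5}$, and multiplying by $w'(1)=-1/\sqrt5$ gives $-2/5$. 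So in the bookkeeping you must retain the $g$-contributions; only the tail of the Eisenstein series is $\mathscr O(q^n)$.
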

\begin{proof}
We employ the same arguments used in the proof of Lemma \ref{lem5.2}. Also now relations \eqref{eq:5.8} and \eqref{eq:5.9} are valid with $Q_n$ replaced by $L_n$. The representation \eqref{eq:3.9} in Lemma \ref{lem3.3} iii) together with Lemma \ref{lem5.1}, taken at $w = \omega$, implies the approximations \eqref{eq:6.5} and \eqref{eq:6.6} by straightforward calculations.
\end{proof}
The main result for the Chebyshev-Stirling numbers is furnished by
\begin{theorem} \label{theo6.2}
Suppose that the sequences $(a_n'), (b_n'), (c_n')$, and the number $\omega$ are given by \eqref{eq:6.3}, \eqref{eq:6.4}, and \eqref{eq:6.2} respectively, then we have
\begin{equation} \label{eq:6.7}
\frac{\sqrt{5b_n'} (2j)! \omega^{2n+1}}{2(2n)!} \,
\left\{ \begin{array}{c} n \\ j \end{array} \right\}_{1/2} = \frac{1}{\sqrt{2\pi}}~ e^{-x^2/2}~
\left( 1 + \frac{c_n' (x^3 - 3x)}{6\sqrt{n}}\right) 
+ o\, \left( \frac{1}{\sqrt{n}}\right),
\end{equation}
as $n\to\infty$, uniformly in $j\in\Z$, where $x = (j-a_n')/\sqrt{b_n'}$.
\end{theorem}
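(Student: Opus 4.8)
The plan is to mirror exactly the proof of Theorem~\ref{theo5.3}, now working with the polynomials $L_n$ in place of $Q_n$. First I would invoke Lemma~\ref{lem3.4}, which guarantees that $L_n$ has only real, non-positive, simple zeros, so the coefficient array $(2j)!\,\big\{{}^n_j\big\}_{1/2}$ fits the Bernoulli scheme \eqref{eq:4.1}--\eqref{eq:4.4} with $A_n(s)=L_n(s)$ and $p(n,j)$ as in \eqref{eq:6.1}. Condition \eqref{eq:4.7} holds because Lemma~\ref{lem6.1} gives $\sigma_n^2=b_n'+{\mathscr O}(q^n)$ and $b_n'$ grows linearly in $n$ (one checks $1/(5\omega^2)-2/(5\sqrt5\,\omega)>0$ from $\omega=2\log\frac{\sqrt5+1}{2}\approx 0.962$, so that $5\omega<\sqrt5\cdot(\text{something})$... more simply, $2(\cosh\omega-1)=1$ forces $e^{\omega}$ to be the golden ratio squared, and a direct estimate shows the coefficient of $2n+1$ in $b_n'$ is positive). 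Hence Lemma~\ref{lem4.1} applies with $k=3$.

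Next I would write out the conclusion of Lemma~\ref{lem4.1} for this array: with $y=(j-\mu_n)/\sigma_n$,
\[
\frac{\sigma_n\,(2j)!\,\big\{{}^n_j\big\}_{1/2}}{L_n(1)}
=\frac{1}{\sqrt{2\pi}}\,e^{-y^2/2}\left(1+\frac{\lambda_{3,n}(y^3-3y)}{6\sqrt n}\right)+o\!\left(\frac1{\sqrt n}\right),
\]
uniformly in $j\in\Z$, where $q_{1,n}(x)=\frac{1}{\sqrt{2\pi}}e^{-x^2/2}\frac{\lambda_{3,n}}{6}H_3(x)$ and $H_3(x)=x^3-3x$. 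Then I need the normalizing constant $L_n(1)$. Using $iv)$ of Lemma~\ref{lem3.3} is awkward at $s=1$; instead I would use $iii)$ with $s=1=1/(2(\cosh\omega-1))$, i.e.\ at $w=\omega$, together with Lemma~\ref{lem5.1} taken at $w=\omega$, to obtain
\[
L_n(1)=(2n)!\,\frac{2(\cosh\omega-1)}{\sinh\omega}\cdot\frac{1}{\omega^{2n+1}}\bigl(1+{\mathscr O}(q^n)\bigr)
=\frac{2(2n)!}{\sqrt5\,\omega^{2n+1}}\bigl(1+{\mathscr O}(q^n)\bigr),
\]
since $2(\cosh\omega-1)=1$ and $\sinh\omega=\sqrt{\cosh^2\omega-1}=\sqrt{(3/2)^2-1}=\sqrt5/2$ (because $\cosh\omega=1+\tfrac12=\tfrac32$).

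Finally I would substitute. The quantities $\mu_n,\sigma_n,\lambda_{3,n}$ are replaced by $a_n',b_n',c_n'$ at a geometric error rate by Lemma~\ref{lem6.1}, so $y=(j-\mu_n)/\sigma_n$ may be replaced by $x=(j-a_n')/\sqrt{b_n'}$ with an error absorbed into the $o(1/\sqrt n)$ term (here one uses that $e^{-y^2/2}$, $y^3-3y$ are smooth and that the densities are uniformly bounded, exactly as in Theorem~\ref{theo5.3}); likewise $\sigma_n$ on the left becomes $\sqrt{b_n'}$, $\lambda_{3,n}$ becomes $c_n'$, and $L_n(1)$ is replaced by $\tfrac{2(2n)!}{\sqrt5\,\omega^{2n+1}}$. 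Multiplying through by $\sqrt5\,\omega^{2n+1}/\bigl(2(2n)!\bigr)\cdot\sigma_n$ and collecting the constants $\tfrac{2(\cosh\omega-1)}{\sinh\omega}=\tfrac{1}{\sqrt5/2}=\tfrac{2}{\sqrt5}$ yields precisely the prefactor $\dfrac{\sqrt{5b_n'}\,(2j)!\,\omega^{2n+1}}{2(2n)!}$ on the left of \eqref{eq:6.7}. The main obstacle is none of these steps individually but the bookkeeping in Lemma~\ref{lem6.1}: one must expand the Eisenstein series representation \eqref{eq:3.9} of $L_n$ and its first three $s$-derivatives at $s=1$ via \eqref{eq:5.8}--\eqref{eq:5.9}, using Lemma~\ref{lem5.1} at $w=\omega$ and the identities $\cosh\omega=\tfrac32$, $\sinh\omega=\tfrac{\sqrt5}{2}$, to verify that the linear-in-$n$ leading terms match the stated $a_n',b_n',c_n'$; this is the ``tedious but straightforward'' computation, and it is exactly where the special value $\gamma=1/2$ is essential, since it is what makes $s=1$ correspond to the clean point $2(\cosh w-1)=1$.
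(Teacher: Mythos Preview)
Your proposal is correct and follows essentially the same route as the paper's proof: apply Lemma~\ref{lem4.1} with $k=3$ to the distribution \eqref{eq:6.1} (justified by Lemmas~\ref{lem3.4} and~\ref{lem6.1}), compute $L_n(1)=\dfrac{2(2n)!}{\sqrt5\,\omega^{2n+1}}\bigl(1+{\mathscr O}(q^n)\bigr)$ from \eqref{eq:3.9} and Lemma~\ref{lem5.1} at $w=\omega$, and then replace $\mu_n,\sigma_n^2,\lambda_{3,n}$ by $a_n',b_n',c_n'$ using the geometric error rates in Lemma~\ref{lem6.1}. Your explicit verification that $\sinh\omega=\sqrt5/2$ (from $\cosh\omega=3/2$) and your remark that the real computational burden lies in establishing Lemma~\ref{lem6.1} itself are accurate elaborations of what the paper leaves as ``routine calculations.''
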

\begin{proof}
This will be accomplished as that one of Theorem \ref{theo5.3}, now by an application of Lemma \ref{lem4.1} with $k = 3$ to the probabilities \eqref{eq:6.1}. We note that the crucial condition \eqref{eq:4.7} is satisfied in view of Lemma \ref{lem6.1}. It follows that
\begin{equation} \label{eq:6.8}
\frac{\sigma_n (2j)!}{L_n (1)} ~ \left\{ \begin{array}{c} n \\ j \end{array} \right\}_{1/2} =
\frac{1}{\sqrt{2\pi}}~ e^{-y^2/2} \, 
\left( 1 + \frac{\lambda_{3,n} (y^3 - 3y)}{6\sqrt{n}}\right) 
+ o\, \left( \frac{1}{\sqrt{n}}\right),
\end{equation}
as $n\to\infty$, uniformly in $j\in\Z$, where $y = (j - \mu_n)/\sigma_n$, and $\mu_n, \sigma_n,\lambda_{3,n}$ are taken from Lemma \ref{lem6.1}. Finally, after some routine calculations we arrive at \eqref{eq:6.7} with the help of Lemmas 
\ref{lem6.1}, \ref{lem5.1}, \ref{lem3.6} ii) and the formula
\[
L_n (1) = \frac{2(2n)!}{\sqrt{5} \omega^{2n+1}} \, \big( 1 + {\mathscr O} (q^n)\big), \quad \text{as } n\to\infty.
\]
\end{proof}
The comments on Theorem \ref{theo5.3} essentially apply to Theorem \ref{theo6.2} likewise. Similarly we may expect good approximations of the Chebyshev-Stirling numbers via \eqref{eq:6.7}, if $x = (j-a_n')/\sqrt{b_n'}$ remains bounded, in particular when $j$ is close to $a_n' \sim 2n/\sqrt{5}\omega$. For illustration we put
\begin{equation} \label{eq:6.9}
A' (n,j) := \frac{2(2n)!}{\sqrt{10\pi b_n'} (2j)! \omega^{2n+1}}~ e^{-x^2/2} \,
\left( 1 + \frac{c_n' (x^3 - 3x)}{6\sqrt{n}}\right)
\end{equation}
and choose $n = 500, j = 461$ to obtain the relative comparison
\begin{equation} \label{eq:6.10}
\frac{\left\{ \begin{array}{c} n \\ j \end{array}\right\}_{1/2}}{A'(n,j)}= 1.000891\ldots
\end{equation}
Also we mention that one could improve both results in Theorems \ref{theo5.3} and \ref{theo6.2} by making use of higher terms in Lemma \ref{lem4.1}. However, we will not pursue this topic. \\

Finally, we conclude this section by the statement that the numbers $(2j)! \big\{ ^n_j\}_{1/2}$ are asymptotically normal. Since a proof makes use of routine arguments on the basis of either Lyapunov's theorem \cite[p. 23]{35} or of our Theorem \ref{theo6.2} directly, we omit detailed explanations.
\begin{theorem} \label{theo6.3}
Suppose that the sequences $(a_n'), (b_n')$, and the number $\omega$ are given by \eqref{eq:6.3} and \eqref{eq:6.2} respectively, then for all $y\in\R$ we have
\[
\lim_{n\to\infty} \, \frac{\sqrt{5} \omega^{2n+1}}{2(2n)!} \, 
\sum_{j\leq a_n' + y\sqrt{b_n'}} (2j)! \,
\left\{ \begin{array}{c} n \\ j \end{array} \right\}_{1/2} = 
\frac{1}{\sqrt{2\pi}}\, \int\limits_{-\infty}^y e^{-t^2/2}\, dt.
\]
\end{theorem}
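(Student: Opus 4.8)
The plan is to obtain the CLT (integral) statement of Theorem~\ref{theo6.3} directly from the local limit statement of Theorem~\ref{theo6.2} by summation, rather than invoking Lyapunov's theorem separately. Recall that the probabilities $p(n,j) = (2j)!\,\big\{^n_j\big\}_{1/2}/L_n(1)$ form a genuine distribution on $0\le j\le n$, coming from the Bernoulli array attached to $L_n$ via Lemma~\ref{lem3.4}, and that by Lemma~\ref{lem6.1} the mean $\mu_n = a_n' + {\mathscr O}(q^n)$ and variance $\sigma_n^2 = b_n' + {\mathscr O}(q^n)$ with $\sigma_n^2 \asymp n$. Writing $F_n(y)$ for the left-hand side of the asserted limit, note that, using $L_n(1) = 2(2n)!\,\omega^{-2n-1}/\sqrt5\,(1+{\mathscr O}(q^n))$ from the proof of Theorem~\ref{theo6.2},
\[
F_n(y) = \big(1 + {\mathscr O}(q^n)\big)\sum_{j\le a_n' + y\sqrt{b_n'}} p(n,j) = \big(1+{\mathscr O}(q^n)\big)\,P\!\left(S_n \le a_n' + y\sqrt{b_n'}\right).
\]
Since $a_n' = \mu_n + {\mathscr O}(q^n)$ and $\sqrt{b_n'} = \sigma_n + {\mathscr O}(q^n/\sqrt n)$, the event $\{S_n \le a_n' + y\sqrt{b_n'}\}$ differs from $\{(S_n-\mu_n)/\sigma_n \le y\}$ only by an exponentially small shift in the threshold, which changes the probability by at most $O(q^n)\cdot\max_j p(n,j) = o(1)$ because $\max_j p(n,j) = {\mathscr O}(\sigma_n^{-1}) = {\mathscr O}(n^{-1/2})$ by Theorem~\ref{theo6.2}. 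Hence it suffices to prove $P\big((S_n-\mu_n)/\sigma_n \le y\big)\to \Phi(y)$, the standard normal c.d.f.

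For that final step I would pass from the local expansion to the integral one by a Riemann-sum argument. Apply Theorem~\ref{theo6.2} (equivalently the $k=3$ case of Lemma~\ref{lem4.1}) in the form $\sigma_n\,p(n,j) = \varphi(y_j) + O(n^{-1/2})$ uniformly in $j$, where $\varphi$ is the standard Gaussian density, $y_j = (j-\mu_n)/\sigma_n$, and I have absorbed the Hermite correction term (which is $O(n^{-1/2})$ uniformly on bounded $y$-sets, and handled by a tail estimate outside them) into the error. Summing over $j \le a_n' + y\sqrt{b_n'}$ and observing that consecutive points $y_j$ are spaced $1/\sigma_n$ apart,
\[
P\big((S_n-\mu_n)/\sigma_n \le y\big) = \sum_{y_j \le y}\varphi(y_j)\,\frac{1}{\sigma_n} + O\!\big(N_n\cdot n^{-1}\big),
\]
where $N_n$ is the number of summands; restricting first to $y_j$ in a large fixed window $[-T,T]$ makes $N_n = O(\sigma_n) = O(\sqrt n)$ there, so that error is $O(n^{-1/2})$, and the sum is a Riemann sum converging to $\int_{-\infty}^{y}\varphi(t)\,dt$ as $n\to\infty$ followed by $T\to\infty$. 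The contributions from $|y_j| > T$ are controlled, uniformly in $n$, by a Chebyshev/second-moment bound $P(|S_n-\mu_n| > T\sigma_n) \le T^{-2}$, which is where the variance asymptotics $\sigma_n^2 \asymp n$ (condition~\eqref{eq:4.7}) re-enter.

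The main obstacle is the uniform tail control: Lemma~\ref{lem4.1} gives a good pointwise approximation only where $x$ is bounded, so one must separately argue that the mass of $S_n$ outside a window of a few standard deviations is negligible, uniformly in $n$. This is precisely supplied by Chebyshev's inequality together with $\sigma_n^2 = b_n' + {\mathscr O}(q^n)$ from Lemma~\ref{lem6.1}; alternatively, since the $X_{n\nu}$ are bounded and row-wise independent, a Hoeffding-type bound gives even exponential tails. Everything else is the routine Riemann-sum passage and the exponentially small bookkeeping between $(a_n',b_n')$ and $(\mu_n,\sigma_n)$; since the paper already flags this as following by ``routine arguments,'' I would present the Chebyshev tail bound and the Riemann-sum limit explicitly and leave the elementary error estimates to the reader.
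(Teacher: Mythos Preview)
Your proposal is correct and follows precisely one of the two routes the paper itself indicates: the paper does not give a proof at all but merely remarks that the result follows ``by routine arguments on the basis of either Lyapunov's theorem or of our Theorem~\ref{theo6.2} directly,'' and your Riemann-sum passage from the local expansion of Theorem~\ref{theo6.2} to the cumulative limit, with Chebyshev controlling the tails and the $(a_n',b_n')\leftrightarrow(\mu_n,\sigma_n^2)$ bookkeeping handled via Lemma~\ref{lem6.1}, is exactly the second of these. One small sharpening: the Hermite correction $e^{-y^2/2}\lambda_{3,n}(y^3-3y)/(6\sqrt{n})$ is in fact $O(n^{-1/2})$ uniformly for \emph{all} $j$ (since $e^{-y^2/2}(y^3-3y)$ is globally bounded and $\lambda_{3,n}$ is bounded), so you need not split its treatment into bounded and unbounded $y$.
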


$^{a}$ Department of Mathematics, University of Trier, 54286 Trier, Germany \\ 
\textit{Email address:} {\tt gawron@uni-trier.de}\\

$^{b,*}$ Department of Mathematics, Baylor University, Waco, TX 76798-7328, USA \\
\textit{Email address:} {\tt lance\_littlejohn@baylor.edu}\\
$^{*}$Corresponding Author\\

$^{c,**}$ Department of Mathematics, Katholieke Universiteit Leuven, Celestijnenlaan 200B, B-3001 Leuven, Belgium\\
\textit{Email address:} {\tt thorsten.neuschel@wis.kuleuven.be}\\
$^{**}$ Thorsten Neuschel gratefully acknowledges support from KU Leuven research grant OT/12/073.\\

\end{document}